\newtheorem{theorem}{Theorem}[section]
\newtheorem{proposition}[theorem]{Proposition}
\newtheorem{lemma}[theorem]{Lemma}
\newtheorem{corollary}[theorem]{Corollary}
\theoremstyle{remark}
\newtheorem{definition}[theorem]{Definition}
\DeclarePairedDelimiter{\ceil}{\lceil}{\rceil}
\newcommand{\NN}{\mathbb{N}}
\newcommand{\RR}{\mathbb{R}}
\newcommand{\PP}{\mathbb{P}}
\newcommand{\EE}{\mathbb{E}}
\newcommand{\norm}[1]{\lVert {#1} \rVert}
\newcommand{\seq}[1]{({#1})}
\begin{document}

\title[A quantitative Robbins-Siegmund theorem]{A quantitative Robbins-Siegmund theorem}

\author[Morenikeji Neri and Thomas Powell]{Morenikeji Neri and Thomas Powell}
\date{\today}

\begin{abstract}
The Robbins-Siegmund theorem is one of the most important results in stochastic optimization, where it is widely used to prove the convergence of stochastic algorithms. We provide a quantitative version of the theorem, establishing a bound on how far one needs to look in order to locate a region of \emph{metastability} in the sense of Tao. Our proof involves a metastable analogue of Doob's theorem for $L_1$-supermartingales along with a series of technical lemmas that make precise how quantitative information propagates through sums and products of stochastic processes. In this way, our paper establishes a general methodology for finding metastable bounds for stochastic processes that can be reduced to supermartingales, and therefore for obtaining quantitative convergence information across a broad class of stochastic algorithms whose convergence proof relies on some variation of the Robbins-Siegmund theorem. We conclude by discussing how our general quantitative result might be used in practice.
\end{abstract}

\maketitle

\section{Introduction}
\label{sec:intro}

Abstract convergence results pertaining to almost-supermartingales are of fundamental importance in stochastic approximation. The most well-known and widely used is the so-called Robbins-Siegmund theorem:
\begin{theorem}[Robbins-Siegmund \cite{robbins-siegmund:71:lemma}]
\label{res:RS:original}
Let $\seq{X_n}$, $\seq{A_n}$, $\seq{B_n}$ and $\seq{C_n}$ be sequences of nonnegative integrable random variables on some arbitrary probability space and adapted to the filtration $\seq{\mathcal F_n}$, with $\sum_{i=0}^\infty A_i<\infty$ and $\sum_{i=0}^\infty C_i<\infty$ almost surely and 
\[
\EE[X_{n+1}\mid\mathcal F_n]\leq (1+A_n)X_n-B_n+C_n
\]
almost surely for all $n\in\NN$. Then, almost surely, $\seq{X_n}$ converges and $\sum_{i=0}^\infty B_i<\infty$.
\end{theorem}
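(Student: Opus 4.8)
The plan is to reduce the statement to the case of a genuine $L_1$-supermartingale and then apply Doob's convergence theorem, inserting a localisation step to handle the fact that the control sums $\sum_i A_i$ and $\sum_i C_i$ are only almost surely --- not necessarily $L_1$ --- finite. First I would eliminate the multiplicative factor by rescaling: set $\beta_n := \prod_{i=0}^{n-1}(1+A_i)$ with $\beta_0 := 1$, so that, since the $A_i$ are nonnegative with $\sum_i A_i<\infty$ a.s., the sequence $\seq{\beta_n}$ is nondecreasing and converges a.s. to a finite limit $\beta_\infty\in[1,\infty)$. As $\beta_{n+1}=(1+A_n)\beta_n$ is $\mathcal F_n$-measurable, dividing the hypothesis through by $\beta_{n+1}$ shows that $\widetilde X_n := X_n/\beta_n$, $\widetilde B_n := B_n/\beta_{n+1}$ and $\widetilde C_n := C_n/\beta_{n+1}$ are nonnegative, integrable and adapted, satisfy $\EE[\widetilde X_{n+1}\mid\mathcal F_n]\le \widetilde X_n-\widetilde B_n+\widetilde C_n$ a.s., and obey $\sum_i \widetilde C_i\le\sum_i C_i<\infty$ a.s. Since $1\le\beta_n\le\beta_\infty<\infty$, a.s. convergence of $\seq{\widetilde X_n}$ and summability of $\seq{\widetilde B_n}$ transfer back to $\seq{X_n}$ and $\seq{B_n}$, so I may assume $A_n\equiv 0$ from now on.

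Next, with $A_n\equiv 0$, I would form the process $Z_n := X_n+\sum_{i=0}^{n-1}(B_i-C_i)$; a one-line computation from the hypothesis yields $\EE[Z_{n+1}\mid\mathcal F_n]\le Z_n$, so $\seq{Z_n}$ is an integrable supermartingale. The catch is that it need not be bounded below by an integrable random variable, since $\sum_i C_i$ may fail to lie in $L_1$, so Doob's theorem does not apply directly. I would get around this by localising: for each fixed $m\in\NN$ let $\tau_m := \inf\{\, n : \sum_{i=0}^{n} C_i > m \,\}$, which is a stopping time since each $C_n$ is $\mathcal F_n$-measurable. The stopped process $Z^{(m)}_n := Z_{n\wedge\tau_m}$ is an integrable supermartingale with $Z^{(m)}_n \ge -\sum_{i=0}^{(n\wedge\tau_m)-1} C_i \ge -m$ by construction, so $Z^{(m)}_n+m$ is a nonnegative supermartingale and Doob's supermartingale convergence theorem applies to it. Hence $\seq{Z_n}$ converges a.s. on $\{\tau_m=\infty\}=\{\sum_{i=0}^\infty C_i\le m\}$, and since $\sum_i C_i<\infty$ a.s. these events increase to a full-measure set as $m\to\infty$, so $\seq{Z_n}$ converges a.s.

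Finally I would unwind the definition of $Z_n$: since $\sum_{i=0}^{n-1} C_i$ converges a.s. to a finite limit, $X_n+\sum_{i=0}^{n-1} B_i$ converges a.s.; the second summand is nondecreasing and $X_n\ge 0$, so the series $\sum_i B_i$ cannot diverge, which gives $\sum_i B_i<\infty$ a.s., and then $\seq{X_n}$ converges a.s. as the difference of two a.s. convergent sequences. I expect the localisation in the second step to be the main obstacle: the natural supermartingale fails the hypotheses of Doob's theorem precisely because $\sum_i A_i$ and $\sum_i C_i$ are controlled only almost surely rather than in $L_1$, and the stopping-time truncation is exactly what repairs this --- it is also the feature that a quantitative, metastable version of the argument will need to handle with the greatest care.
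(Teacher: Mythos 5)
Your proof is correct and is essentially the argument that the paper makes quantitative in Theorem \ref{res:RS}: the same rescaling by $P_n=\prod_{i=0}^{n-1}(1+A_i)$, the same compensated supermartingale (your $Z_n$ is the paper's $V_n=\tilde X_n-\sum_{i=0}^{n-1}(\tilde C_i-\tilde B_i)$), and the same stopping-time truncation $T_x$ making $V_{n\wedge T_x}+x$ a nonnegative supermartingale to which Doob's theorem applies. The only cosmetic difference is that you extract both conclusions from the single process $Z_n$, whereas the paper also isolates $U_n=\tilde X_n-\sum_{i=0}^{n-1}\tilde C_i$ so as to track the convergence of $\seq{X_n}$ separately, which matters only for the quantitative bookkeeping.
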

Theorem \ref{res:RS:original} has been used to establish the convergence of a broad range of classic stochastic approximation algorithms, most famously those based on the Robbins-Monro scheme \cite{robbins-monro:51:stochastic} such as stochastic gradient descent, and it is generally applicable to any kind of stochastic process that enjoys a \emph{quasi-Fej\'er monotonicity property}, of which there are many examples (cf. recent developments along these lines in \cite{combettes-pesquet:15:fejer} or the survey \cite{franci-grammatico:convergence:survey:22}).

Direct, numerical information for Theorem \ref{res:RS:original} in the form of convergence rates for $\seq{X_n}$ and $\sum_{i=0}^\infty B_i<\infty$ are in general not possible, as the result implies Doob's $L_1$-convergence theorem for supermartingales and thus convergence can be arbitrarily slow. As such, most existing works on convergence rates for algorithms reliant on the Robbins-Siegmund theorem (recent examples include \cite{karandikar:pp:stochastic,liu:etal:22:RSrates,neri-pischke-powell:pp:fejer,sebbouh:etal:21:RSrates}) focus on specific instances of the theorem where additional assumptions are imposed on the negative term $-B_n$ to ensure a strong descent and thus convergence rates. 

We consider quantitative supermartingale convergence from a different angle, and instead provide a quantitative version of the full Robbins-Siegmund theorem in terms of \emph{metastability} (in the sense of Tao \cite{tao:07:softanalysis}). Our approach is inspired by the logic-based \emph{proof mining} program \cite{kohlenbach:08:book}, that uses insights and techniques from logic to obtain both quantitative and qualitative improvements of theorems in mainstream mathematics through the analysis of their proofs. In the last few years, techniques from proof mining have been brought to bear on probability theory for the very first time, including an abstract framework for explaining the relevant logical aspects of probability \cite{neri-pischke:pp:formal}, along with a quantitative study of martingales \cite{neri-powell:25:martingale}, where the latter is of particular relevance for what follows.

This paper then represents one of the first applications of proof theory to stochastic optimization, and we envisage our main result being further developed and applied to obtain concrete numerical information from specific proofs that use the Robbins-Siegmund theorem. Indeed, an important benefit of the logical approach is that it works in a modular way, where informally speaking, as long as a theorem has been suitably finitized, bounds for that finitary theorem can be ``plugged in'' directly to proofs that use it as a lemma, and in this way used as a component in a broader analysis (see \cite{kohlenbach:08:book} for a comprehensive account of this phenomenon). Examples relating to convergence -- where a metastable convergence theorem has been used to obtain direct numerical information -- abound in the proof mining literature, though until now always in a nonstochastic setting. In Section \ref{sec:RS:app} we provide a case study exactly along these lines but for stochastic algorithms, in which our quantitative Robbins-Siegmund theorem is used to bound the number of iterations of an abstract stochastic algorithm until an approximate solution is found. Given the widespread use of the Robbins-Siegmund theorem across stochastic optimization, we anticipate that our main result can be applied in many different ways to obtain new numerical information for stochastic algorithms. 

\section{Quantitative stochastic convergence}
\label{sec:convergence}

We begin by collecting together a number of preliminary definitions and lemmas. We use the notation $[n;m]:=\{n,n+1,\ldots,m-1,m\}$ with the convention that $[n;m]=\emptyset$ if $m<n$. We freely use the logical quantifier symbols $\exists,\forall$ throughout as they allow us to express several of our main definitions in a compact manner, and whenever these occur inside a probability measure they implicitly refer to an appropriate equivalent formulation in terms of infinite unions or intersections.

Let $\seq{x_n}$ be a sequence of reals, which is convergent iff it satisfies the Cauchy property:
\begin{equation}
\label{eqn:cauchy}
\forall\varepsilon>0\, \exists n\, \forall i,j\geq n\left(|x_i-x_j|<\varepsilon\right).
\end{equation}
The most direct piece of quantitative information we could hope for in this context is an explicit function $f:(0,1)\to\NN$ such that for each $\varepsilon>0$ we have $|x_i-x_j|<\varepsilon$ for all $i,j\geq f(\varepsilon)$. However, for many classes of convergent sequences, an explicit convergence rate is not possible, so instead we consider the following alternative characterisation of convergence, so-called \emph{metastable} convergence, that in general \emph{can} be characterised computationally:
\begin{equation}
\label{eqn:metastable}
\forall\varepsilon>0\, \forall g:\NN\to\NN\, \exists n\, \forall i,j\in [n;n+g(n)]\left(|x_i-x_j|<\varepsilon\right).
\end{equation}
To see that (\ref{eqn:cauchy}) and (\ref{eqn:metastable}) are equivalent, note that (\ref{eqn:cauchy}) clearly implies (\ref{eqn:metastable}), and conversely, if (\ref{eqn:cauchy}) doesn't hold then for some $\varepsilon>0$ there exists some $g$ such that $$\forall n\, \exists i,j\in [n;n+g(n)](|x_i-x_j|\geq\varepsilon)$$ and taking the negation of this we obtain (\ref{eqn:metastable}). A functional $\Phi:(0,1)\times \NN^\NN\to\NN$ satisfying
\begin{equation*}
\label{eqn:metastable:rate}
\forall\varepsilon>0\, \forall g:\NN\to\NN\, \exists n\leq\Phi(\varepsilon,g)\, \forall i,j\in [n;n+g(n)]\left(|x_i-x_j|<\varepsilon\right)
\end{equation*}
is called a \emph{metastable rate of convergence} (or just \emph{rate of metastability}) for $\seq{x_n}$. A detailed account of metastabile convergence and its connection to formal logical transformations can be found in \cite{kohlenbach:08:book} (see Chapter 2 in particular), while its significance in mainstream mathematics is discussed in \cite{tao:07:softanalysis}. In contrast to direct rates of convergence, explicit and highly-uniform metastable rates of convergence for classes of sequences can often be obtained through analysing the respective convergence proofs, and indeed, the extraction of rates of metastability is a standard result in proof mining with numerous examples in recent years.

A very natural situation in which convergent sequences posses particularly simple metastable rates are those which for which an explicit bound on their $\varepsilon$-fluctuations can be given. We say that a sequence $\seq{x_n}$ experiences $k$ $\varepsilon$-fluctuations if there exists
\begin{equation*}
i_1<j_1\leq i_2<j_2\leq\ldots \leq i_k<j_k \mbox{ with  $|x_{i_l}-x_{j_l}|\geq \varepsilon$ for all $l=1,\ldots,k$.}
\end{equation*}
If $\phi:(0,1)\to \RR$ is a function such that for any $\varepsilon>0$, $\seq{x_n}$ has no more than $\phi(\varepsilon)$ $\varepsilon$-fluctuations, then the following is a metastable rate of convergence for $\seq{x_n}$: 
\begin{equation}
\label{eqn:learnable}
\Phi(\varepsilon,g):=\tilde g^{(\lceil\phi(\varepsilon)\rceil)}(0)
\end{equation}
where $\tilde g(n):=n+g(n)$ and $\tilde g^{(i)}$ denotes the $i$th iteration of $\tilde g$. This can be seen by observing that at least one of the intervals $[\tilde g^{(n)}(0);\tilde g^{(n+1)}(0)]$ for $n\leq \lceil\phi(\varepsilon)\rceil$ cannot contain any fluctuation. Indeed, as highlighted in \cite[Section 2.3]{neri-powell:25:martingale}, when a sequence enjoys a metastable rate of convergence of the simple form (\ref{eqn:learnable}), this is equivalent $\phi$ having the following property:
\begin{definition}
\label{def:learnable}
Let $\seq{x_n}$ be a sequence of real numbers. Any function $\phi:(0,1)\to \RR$ such that for all $\varepsilon\in (0,1)$, for any increasing sequences $a_0<b_0\leq a_1<b_1\leq \ldots$ there exists $n\leq \phi(\varepsilon)$ such that
\[
\forall i,j\in [a_n;b_n]\, (|x_i-x_j|< \varepsilon)
\]
is called a \emph{learnable rate of convergence} for $\seq{x_n}$.
\end{definition}
There are many situations in which convergent sequences possess learnable rates of convergence, and hence simple metastable rates of the form (\ref{eqn:learnable}). The following easily-proven result is standard in proof mining (see in particular \cite[Chapter 2]{kohlenbach:08:book}), and is also discussed by Tao in \cite{tao:07:softanalysis} where it is labelled the `finite convergence principle':
\begin{lemma}
\label{res:monotone}
Let $\seq{x_n}$ be a nondecreasing sequence nonnegative reals such that $x_n<K$ for all $n\in\NN$. Then a learnable rate of convergence for $\seq{x_n}$ is given by $\phi(\varepsilon):=K/\varepsilon$, which in turn implies that a metastable rate of convergence is given by $\Phi(\varepsilon,g):=\tilde g^{\lceil K/\varepsilon\rceil}(0)$.  
\end{lemma}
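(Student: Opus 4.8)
The plan is to prove the statement about the learnable rate by a direct telescoping argument exploiting monotonicity, and then to read off the metastable rate as an immediate consequence of the equivalence recorded just before Definition~\ref{def:learnable} (namely that $\phi$ being a learnable rate for $\seq{x_n}$ is the same as $\tilde g^{(\lceil\phi(\varepsilon)\rceil)}(0)$ being a metastable rate for $\seq{x_n}$, cf.\ also~(\ref{eqn:learnable})), so that no separate argument is needed for the second claim.

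For the first claim, fix $\varepsilon\in(0,1)$ and an increasing sequence $a_0<b_0\leq a_1<b_1\leq\ldots$. Since $\seq{x_n}$ is nondecreasing, for each $n$ and all $i,j\in[a_n;b_n]$ we have $|x_i-x_j|\leq x_{b_n}-x_{a_n}$, so it suffices to exhibit a natural number $n\leq K/\varepsilon$ with $x_{b_n}-x_{a_n}<\varepsilon$. I would argue by contradiction: if $x_{b_n}-x_{a_n}\geq\varepsilon$ held for every $n\in\{0,1,\ldots,\lfloor K/\varepsilon\rfloor\}$, then, using monotonicity together with $b_n\leq a_{n+1}$ (hence $x_{b_n}\leq x_{a_{n+1}}$) at each step, one telescopes to obtain
\[
x_{b_{\lfloor K/\varepsilon\rfloor}}\;\geq\;x_{a_0}+\bigl(\lfloor K/\varepsilon\rfloor+1\bigr)\varepsilon\;\geq\;\bigl(\lfloor K/\varepsilon\rfloor+1\bigr)\varepsilon\;>\;K,
\]
where the penultimate inequality uses $x_{a_0}\geq 0$ and the last uses $\lfloor t\rfloor+1>t$. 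This contradicts $x_n<K$ for all $n$, so the desired $n$ exists, and hence $\phi(\varepsilon):=K/\varepsilon$ is a learnable rate of convergence for $\seq{x_n}$ in the sense of Definition~\ref{def:learnable}.

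Finally, by the equivalence noted immediately before Definition~\ref{def:learnable} together with~(\ref{eqn:learnable}), the learnable rate $\phi(\varepsilon)=K/\varepsilon$ at once yields the metastable rate $\Phi(\varepsilon,g):=\tilde g^{(\lceil\phi(\varepsilon)\rceil)}(0)=\tilde g^{(\lceil K/\varepsilon\rceil)}(0)$. There is no genuine obstacle here; the only points requiring a modicum of care are the off-by-one bookkeeping with floors and ceilings in the contradiction hypothesis (making sure the strict inequality $x_{b_{\lfloor K/\varepsilon\rfloor}}>K$ really emerges), and, should one prefer to re-derive the metastable rate directly rather than invoke the equivalence, the remark that any interval $[\tilde g^{(n)}(0);\tilde g^{(n+1)}(0)]$ that degenerates to a single point trivially satisfies the required $\varepsilon$-fluctuation bound.
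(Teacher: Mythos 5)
Your proof is correct, and it is essentially the argument the paper has in mind: the paper leaves Lemma~\ref{res:monotone} as a standard fact, but your telescoping bound on the number of $\varepsilon$-jumps of a bounded nondecreasing sequence is exactly the deterministic counterpart of the counting argument used in the proof of Lemma~\ref{res:monotone:stochastic}, and reading off the metastable rate via the equivalence around~(\ref{eqn:learnable}) (with the noted remark on degenerate intervals when $g(n)=0$) is the intended route.
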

%
%
For example, in the special case of a convergent series $\sum_{i=0}^\infty \alpha_i<\infty$ where $\seq{\alpha_n}$ is some sequence of nonnegative reals, if $K>0$ is any number satisfying $\sum_{i=0}^\infty \alpha_i< K$ then for any $a_0<b_0\leq a_1<b_1\leq \ldots$ then $\phi(\varepsilon):=K/\varepsilon$ is a learnable rate of convergence for $\seq{\sum_{i=n}^\infty \alpha_i}$.

Metastable rates of the form (\ref{eqn:learnable}) represent a simple instance of a more general kind of iterative metastable rate that arises from formulas that are \emph{effective learnability}, a concept introduced and explored in \cite{kohlenbach-safarik:14:fluctuations}, though for the purposes of this paper it is sufficient to restrict our attention to learnable rates of convergence in the simple sense of Definition \ref{def:learnable} above. A notable advantage of having learnables rates (in our sense) is that they compose very easily under addition and multiplication of sequences, as will be shown in Lemma \ref{res:addtimes:stochastic} below.

We now introduce stochastic analogues of each of the concepts discussed above, where sequences of real numbers are replaced by sequences of random variables $\seq{X_n}$ over some probability space $(\Omega,\mathcal{F},\PP)$, and we now talk about properties being true \emph{almost-surely}. Informally speaking, we arrive at suitable quantitative notions of these properties by first using continuity properties of the probability function $\PP$ to bring quantifiers from inside the measure to the outside: See in particular \cite[Section 3]{neri-powell:25:martingale} for a detailed discussion of this process of `finitization'. We first note that almost-sure Cauchy convergence of $\seq{X_n}$ i.e.
\[
\PP\left(\forall \varepsilon>0\, \exists n\, \forall i,j\geq n\left(|X_i-X_j|<\varepsilon\right)\right)=1
\]
is equivalent to the following metastable variant (introduced for general measures in \cite{avigad-dean-rute:12:dominated}):
\[
\forall \lambda,\varepsilon>0\, \forall g:\NN\to\NN\, \exists n\, \left[\PP\left(\exists i,j\in [n;n+g(n)]\left(|X_i-X_j|\geq \varepsilon\right)\right)<\lambda\right].
\]
This then leads to the following definition:
\begin{definition}[\cite{avigad-dean-rute:12:dominated,avigad-gerhardy-towsner:10:local,neri-powell:25:martingale}]
\label{def:metastability:uniform}
Let $\seq{X_n}$ be a stochastic process. Any functional $\Phi:(0,1)\times (0,1)\times \NN^\NN\to \NN$ such that for all $\lambda,\varepsilon\in (0,1)$ and $g:\NN\to\NN$ there exists $n\leq \Phi(\lambda,\varepsilon,g)$ satisfying
\begin{equation*}
\PP(\exists i,j\in [n;n+g(n)](|X_i-X_j|\geq \varepsilon))<\lambda
\end{equation*}
is called a \emph{metastable rate of uniform convergence} for $\seq{X_n}$.
\end{definition}
Fluctuations for stochastic processes have been widely studied (for martingales and ergodic averages, \cite{kachurovskii:96:convergence} is particularly relevant), and just as in the deterministic case, bounds on fluctuations give rise to simple iterative rates of metastability. Here the relevant stochastic analogue of learnable convergence (one of many possible notions of stochastic learnability as discussed in \cite{neri-pischke-powell:25:learnability}) is the following:
\begin{definition}[\cite{neri-powell:25:martingale}]
\label{def:learnable:uniform}
Let $\seq{X_n}$ be a stochastic process. Any function $\phi:(0,1)\times (0,1)\to \RR$ such that for all $\lambda,\varepsilon\in (0,1)$, for any increasing sequences $
a_0<b_0\leq a_1<b_1\leq \ldots$ there exists $n\leq \phi(\lambda,\varepsilon)$ such that
\[
\PP\left(\exists i,j\in [a_n;b_n]\, (|X_i-X_j|\geq \varepsilon)\right)<\lambda
\]
is called a \emph{learnable rate of uniform convergence} for $\seq{X_n}$.
\end{definition}
The connection between learnable rates in the above sense and fluctuations is as follows:
\begin{theorem}
[\cite{neri-powell:25:martingale}]
Let $\seq{X_n}$ be a stochastic process and let $J_\varepsilon\seq{X_n}$ denote the maximum number of $\varepsilon$-fluctuations experienced by $\seq{X_n}$. Then
\[
\phi(\lambda,\varepsilon):=\frac{\EE\left[J_\varepsilon\seq{X_n}\right]}{\lambda}
\]
is a learnable rate of uniform convergence for $\seq{X_n}$.
\end{theorem}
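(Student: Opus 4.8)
The plan is to exploit the fact that the intervals $[a_n;b_n]$ are pairwise disjoint and linearly ordered, so that an oscillation of size $\geq\varepsilon$ occurring inside one such interval cannot be ``reused'' in another. Consequently, for a fixed sample point the number of intervals on which $\seq{X_n}$ oscillates by at least $\varepsilon$ is dominated by the global fluctuation count $J_\varepsilon\seq{X_n}$, and a Markov-type averaging over $n$ then produces an interval on which the oscillation event has small probability. Concretely, fixing $\lambda,\varepsilon\in(0,1)$ and an increasing sequence $a_0<b_0\leq a_1<b_1\leq\ldots$, I would set
\[
E_n:=\{\omega\in\Omega:\exists i,j\in[a_n;b_n]\,(|X_i(\omega)-X_j(\omega)|\geq\varepsilon)\},
\]
which, being a finite union of measurable sets, is an event.

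The first step is to establish the pointwise inequality $\sum_{n=0}^\infty\mathbf{1}_{E_n}\leq J_\varepsilon\seq{X_n}$. Given $\omega$, put $S:=\{n:\omega\in E_n\}$ and, for each $n\in S$, choose $i_n,j_n\in[a_n;b_n]$ with $|X_{i_n}(\omega)-X_{j_n}(\omega)|\geq\varepsilon$; since the left-hand side is positive we have $i_n\neq j_n$, so we may assume $i_n<j_n$. Because $j_n\leq b_n\leq a_{n+1}\leq i_{n+1}$ whenever $n,n+1$ are consecutive elements of the relevant enumeration, reading the chosen pairs off in increasing order of $n\in S$ exhibits exactly $|S|$ many $\varepsilon$-fluctuations of $\seq{X_n}$ at $\omega$, whence $J_\varepsilon\seq{X_n}(\omega)\geq|S|=\sum_{n=0}^\infty\mathbf{1}_{E_n}(\omega)$. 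Taking expectations and interchanging the sum with the integral by monotone convergence then gives
\[
\sum_{n=0}^\infty\PP(E_n)=\EE\Big[\sum_{n=0}^\infty\mathbf{1}_{E_n}\Big]\leq\EE\big[J_\varepsilon\seq{X_n}\big].
\]

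If $\EE[J_\varepsilon\seq{X_n}]=\infty$ then $\phi(\lambda,\varepsilon)=\infty$ and there is nothing to prove, so assume $\EE[J_\varepsilon\seq{X_n}]<\infty$. Suppose, towards a contradiction, that $\PP(E_n)\geq\lambda$ for every natural number $n\leq\phi(\lambda,\varepsilon)=\EE[J_\varepsilon\seq{X_n}]/\lambda$. There are $\lfloor\phi(\lambda,\varepsilon)\rfloor+1>\phi(\lambda,\varepsilon)$ such $n$, so $\sum_{n\leq\phi(\lambda,\varepsilon)}\PP(E_n)>\phi(\lambda,\varepsilon)\cdot\lambda=\EE[J_\varepsilon\seq{X_n}]$, contradicting the displayed bound. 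Hence there exists $n\leq\phi(\lambda,\varepsilon)$ with $\PP(E_n)<\lambda$, that is, $\PP(\exists i,j\in[a_n;b_n]\,(|X_i-X_j|\geq\varepsilon))<\lambda$, which is precisely the defining property required by Definition \ref{def:learnable:uniform}.

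The only genuinely delicate point is the first step, namely the pointwise domination $\sum_n\mathbf{1}_{E_n}\leq J_\varepsilon\seq{X_n}$, together with the background fact (recorded in \cite{neri-powell:25:martingale}) that $J_\varepsilon\seq{X_n}$ is itself a measurable $[0,\infty]$-valued random variable, so that $\EE[J_\varepsilon\seq{X_n}]$ is meaningful; once this is in place, the remainder is just the counting/averaging argument above.
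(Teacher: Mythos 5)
Your proof is correct: the pointwise domination $\sum_n \mathbf{1}_{E_n}\leq J_\varepsilon\seq{X_n}$ is justified exactly as you say (the chosen pairs satisfy $j_n\leq b_n\leq a_{n+1}\leq i_{n'}$ for $n<n'$, so they form a valid fluctuation chain), and the Tonelli-plus-averaging step then yields some $n\leq \EE[J_\varepsilon\seq{X_n}]/\lambda$ with $\PP(E_n)<\lambda$, which is precisely Definition \ref{def:learnable:uniform}. The paper states this theorem without proof, citing \cite{neri-powell:25:martingale}, but your counting/averaging argument is the standard route and is essentially the same scheme used in the paper's proofs of Lemma \ref{res:monotone:stochastic} and Theorem \ref{res:supermartingale}, so there is nothing further to add.
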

As such, learnable rates of uniform convergence are closely related to and generalise the property of a stochastic process having bounded fluctuations in mean. Entirely analogously to the deterministic case, whenever $\phi$ is a learnable rate of uniform convergence for $\seq{X_n}$, $$\Phi(\varepsilon,\lambda,g):=\tilde g^{(\lceil \phi(\lambda,\varepsilon)\rceil)}(0)$$ is a metastable rate of uniform convergence for $\seq{X_n}$ (for details see \cite{neri-powell:25:martingale}).

In order to deal with infinite series along with sums and products of stochastic processes in a quantitative way, we must first give a quantitative interpretation to the statement that $\sup_{n\in\NN}|X_n|<\infty$ almost surely. Again, bringing out the relevant quantifiers from within the probability measure function gives us the following notion:
\begin{definition}
\label{def:uniform:boundedness}
Let $\seq{X_n}$ be a stochastic process. Any function $\rho:(0,1)\to \RR$ with
\[
\PP\left(\sup_{n\in\NN}|X_n|\geq \rho(\lambda)\right)<\lambda
\]
for all $\lambda\in (0,1)$ will be called a \emph{modulus of uniform boundedness} for $\seq{X_n}$. 
\end{definition}
The following result on convergence for almost sure monotone sequences is important in what follows, and forms a stochastic analogue of Lemma \ref{res:monotone} above.
\begin{lemma}
\label{res:monotone:stochastic}
Let $\seq{X_n}$ be a nonnegative stochastic process such that $X_n\leq X_{n+1}$ pointwise for all $n\in\NN$, and suppose that $\rho:(0,1)\to\RR$ is a modulus of uniform boundedness for $\seq{X_n}$. Then
\[
\phi(\lambda,\varepsilon):=\frac{2\cdot \rho(\lambda/2)}{\lambda\varepsilon}
\]
is a learnable rate of uniform convergence for $\seq{X_n}$.
\end{lemma}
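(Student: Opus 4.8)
The plan is to reduce the statement to a pigeonhole/counting argument built on two ingredients: the monotonicity of $\seq{X_n}$, which turns oscillation over a block into a single increment, and the modulus $\rho$, which controls the total mass of those increments outside a small exceptional set. Fix $\lambda,\varepsilon\in(0,1)$ and an increasing sequence $a_0<b_0\leq a_1<b_1\leq\ldots$. Since $\seq{X_n}$ is pointwise nondecreasing, for each $n$ the event $\exists i,j\in[a_n;b_n]\,(|X_i-X_j|\geq\varepsilon)$ is exactly $B_n:=\{X_{b_n}-X_{a_n}\geq\varepsilon\}$, because for $a_n\leq i\leq j\leq b_n$ we have $0\leq X_j-X_i\leq X_{b_n}-X_{a_n}$, so the maximal oscillation of $\seq{X_k}$ across $[a_n;b_n]$ equals $X_{b_n}-X_{a_n}$. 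It therefore suffices to exhibit some natural number $n\leq\phi(\lambda,\varepsilon)$ with $\PP(B_n)<\lambda$.

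Next I would set $K:=\rho(\lambda/2)$ and work on the event $E:=\{\sup_k|X_k|<K\}$, which satisfies $\PP(\comp E)<\lambda/2$ by Definition \ref{def:uniform:boundedness}. The crucial pointwise bound is that $\sum_{n=0}^{\infty}(X_{b_n}-X_{a_n})\leq K$ holds on $E$: the increments $X_{b_n}-X_{a_n}$ are nonnegative, and since $b_n\leq a_{n+1}$ the partial sum $\sum_{n=0}^{N}(X_{b_n}-X_{a_n})$ is a subsum of the telescoping sum $X_{b_N}-X_{a_0}\leq X_{b_N}<K$, so the claim follows on letting $N\to\infty$. Multiplying the pointwise inequality $\varepsilon\,\mathbf 1_{B_n\cap E}\leq(X_{b_n}-X_{a_n})\mathbf 1_E$ and summing over $n$ gives $\varepsilon\sum_{n\geq0}\mathbf 1_{B_n\cap E}\leq K$ pointwise, and taking expectations yields $\sum_{n\geq0}\PP(B_n\cap E)\leq K/\varepsilon$.

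Finally I would argue by contradiction. Suppose $\PP(B_n)\geq\lambda$ for every natural number $n\leq\phi(\lambda,\varepsilon)$. From $\PP(B_n)\leq\PP(B_n\cap E)+\PP(\comp E)<\PP(B_n\cap E)+\lambda/2$ we get $\PP(B_n\cap E)>\lambda/2$ for each such $n$, and there are more than $\phi(\lambda,\varepsilon)$ such indices since $n=0$ is included. Summing these strict inequalities and comparing with $\sum_{n\geq0}\PP(B_n\cap E)\leq K/\varepsilon$ forces $\phi(\lambda,\varepsilon)\cdot(\lambda/2)<K/\varepsilon$, i.e. $\phi(\lambda,\varepsilon)<2\rho(\lambda/2)/(\lambda\varepsilon)$, contradicting the definition of $\phi$. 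Hence some natural number $n\leq\phi(\lambda,\varepsilon)$ has $\PP(B_n)<\lambda$, which by the first paragraph is exactly the required statement.

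I do not expect a genuine obstacle: once the reduction to the events $B_n$ is in place the argument is elementary. The only point needing care is balancing the pointwise estimate available on $E$ against the error term $\PP(\comp E)$, which is exactly why $\rho$ is evaluated at $\lambda/2$ and why the constant $2$ appears in $\phi(\lambda,\varepsilon)$; getting this bookkeeping tight (rather than merely finite) is where the stated bound comes from.
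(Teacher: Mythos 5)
Your proposal is correct and follows essentially the same route as the paper: reduce each block event to $\{X_{b_n}-X_{a_n}\geq\varepsilon\}$ via monotonicity, intersect with the high-probability event $\{\sup_k X_k<\rho(\lambda/2)\}$, and derive a contradiction from a first-moment count using the fact that the nonnegative increments $X_{b_n}-X_{a_n}$ sum (telescopically) to less than $\rho(\lambda/2)$ on that event. Your pointwise summation of increments is just a repackaging of the paper's observation that an $\omega$ in the boundedness event can lie in fewer than $\rho(\lambda/2)/\varepsilon$ of the events $Q_n$, so there is no substantive difference.
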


\begin{proof}
Fix $\lambda,\varepsilon\in (0,1)$ and $a_0<b_0\leq a_1<b_1\leq\ldots$. Then for any $n\in\NN$ we have
\begin{equation*}
\begin{aligned}
\PP\left(\exists i,j\in [a_n;b_n]\, |X_i-X_j|\geq \varepsilon\right)&=\PP\left(X_{b_n}-X_{a_n}\geq \varepsilon\right)\\
&\leq \PP\left(X_{b_n}-X_{a_n}\geq \varepsilon\cap \sup_{n\in\NN}X_n<\rho(\lambda/2)\right)+\frac{\lambda}{2}
\end{aligned}
\end{equation*}
Setting $Q_n:=X_{b_n}-X_{a_n}\geq \varepsilon$ and $R:=\sup_{n\in\NN}X_n<\rho(\lambda/2)$ it suffices to show that there exists some $n\leq \phi(\lambda,\varepsilon)$ such that $\PP(Q_n\cap R)<\lambda/2$. If this were not the case we would have
\[
\frac{\rho(\lambda/2)}{\varepsilon}<\frac{\lambda}{2}\left(\phi(\lambda,\varepsilon)+1\right)\leq \sum_{n=0}^{\phi(\lambda,\varepsilon)} \PP(Q_n\cap R)=\EE\left[I_R\sum_{n=0}^{\phi(\lambda,\varepsilon)} I_{Q_n}\right]
\]
But for $\omega \in R$ we can only have $\omega \in Q_n$ for strictly fewer than $\rho(\lambda/2)/\varepsilon$ distinct values of $n\in\NN$, since whenever $\omega\in Q_{n_1}\cap \ldots \cap Q_{n_k}\cap R$ for $n_1<\ldots<n_k$ then
\[
\rho(\lambda/2)> X_{b_{n_k}}(\omega)\geq \sum_{j=1}^k \left(X_{b_{n_j}}(\omega)-X_{a_{n_j}}(\omega)\right)\geq k\varepsilon
\]
Therefore we obtain
\[
\EE\left[I_R\sum_{n=0}^{\phi(\lambda,\varepsilon)} I_{Q_n}\right]\leq\EE\left[I_R\cdot\frac{\rho(\lambda/2)}{\varepsilon}\right]\leq \frac{\rho(\lambda/2)}{\varepsilon}
\]
which is a contradiction.\end{proof}
In particular, almost sure convergence of a series $\sum_{i=0}^\infty A_i$ where now the $A_i$ are nonnegative random variables can be represented quantitatively through a modulus of uniform boundedness $\rho$ i.e. a function satisfying
\[
\PP\left(\sum_{i=0}^\infty A_i\geq \rho(\lambda)\right)<\lambda
\] 
for all $\lambda\in (0,1)$, and this will be our preferred definition of quantitative convergence for infinite series. It is related to various other quantitative notions of convergence as follows:

\begin{proposition}
\label{res:sums:converge}
Let $\seq{A_n}$ be a sequence of nonnegative random variables.

\begin{enumerate}[(a)]

	\item If $\phi:(0,1)^2\to \NN$ is a direct rate of convergence for $\sum_{i=0}^\infty A_i$ in the sense that
	\[
	\PP\left(\sum_{i=\phi(\lambda,\varepsilon)}^\infty A_i\geq \varepsilon\right)<\lambda
	\]
	for all $\lambda,\varepsilon\in (0,1)$, and $a>0$ is such that $A_n\subseteq [0,a]$ for all $n\in\NN$, then for any $\varepsilon\in (0,1)$
	\[
	\rho_\varepsilon(\lambda):=a\cdot \phi(\lambda,\varepsilon)+\varepsilon
	\]
	is a modulus of uniform boundedness for $\sum_{i=0}^\infty A_i$.\smallskip
	
	\item If $\rho:(0,1)\to \NN$ is a modulus of uniform boundedness for $\sum_{i=0}^\infty A_i$ then
	\[
	\psi(\lambda,\varepsilon):=\frac{2\cdot \rho(\lambda/2)}{\lambda\varepsilon}
	\]
	is a learnable rate of uniform convergence for $\seq{\sum_{i=0}^n A_i}$.

\end{enumerate}
\end{proposition}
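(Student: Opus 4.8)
The plan is to prove both parts by elementary means, reducing part (b) directly to Lemma~\ref{res:monotone:stochastic}.

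For part (a), fix $\varepsilon\in(0,1)$ and $\lambda\in(0,1)$, and abbreviate $N:=\phi(\lambda,\varepsilon)$. I would split the series as $\sum_{i=0}^\infty A_i=\sum_{i=0}^{N-1}A_i+\sum_{i=N}^\infty A_i$. Since $A_i$ takes values in $[0,a]$ and the first summand consists of exactly $N$ terms, we have $\sum_{i=0}^{N-1}A_i\leq aN$ pointwise. Consequently the event $\{\sum_{i=0}^\infty A_i\geq aN+\varepsilon\}$ is contained in the event $\{\sum_{i=N}^\infty A_i\geq\varepsilon\}$, and the latter has probability strictly less than $\lambda$ by the assumption that $\phi$ is a direct rate of convergence. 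Since $aN+\varepsilon=\rho_\varepsilon(\lambda)$, this shows $\PP(\sum_{i=0}^\infty A_i\geq\rho_\varepsilon(\lambda))<\lambda$ for all $\lambda\in(0,1)$, i.e.\ $\rho_\varepsilon$ is a modulus of uniform boundedness for $\sum_{i=0}^\infty A_i$.

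For part (b), consider the partial-sum process $Y_n:=\sum_{i=0}^n A_i$. Since each $A_i$ is nonnegative, $\seq{Y_n}$ is a nonnegative stochastic process with $Y_n\leq Y_{n+1}$ pointwise, and $\sup_{n\in\NN}Y_n=\sum_{i=0}^\infty A_i$. Hence any modulus of uniform boundedness $\rho$ for $\sum_{i=0}^\infty A_i$ is, verbatim, a modulus of uniform boundedness for $\seq{Y_n}$ in the sense of Definition~\ref{def:uniform:boundedness}. Applying Lemma~\ref{res:monotone:stochastic} to $\seq{Y_n}$ then yields immediately that $\psi(\lambda,\varepsilon)=2\cdot\rho(\lambda/2)/(\lambda\varepsilon)$ is a learnable rate of uniform convergence for $\seq{Y_n}=\seq{\sum_{i=0}^n A_i}$.

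Neither part presents a genuine difficulty; the only points requiring a little care are the index bookkeeping in part (a) --- that $\sum_{i=0}^{N-1}A_i$ has $N$ terms and is therefore bounded by $aN$ rather than $a(N-1)$ --- and the observation in part (b) that the supremum of the partial sums coincides with the value of the infinite series, which is exactly what makes the two notions of modulus of uniform boundedness interchangeable.
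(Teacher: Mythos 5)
Your proof is correct and follows essentially the same route as the paper: part (a) via the pointwise bound $\sum_{i=0}^{\phi(\lambda,\varepsilon)-1}A_i\leq a\cdot\phi(\lambda,\varepsilon)$ and the resulting inclusion of events, and part (b) by observing that the monotone partial-sum process inherits the modulus of uniform boundedness and then invoking Lemma~\ref{res:monotone:stochastic}. Nothing to add.
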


\begin{proof}
For part (a) we observe that
\begin{equation*}
\begin{aligned}
\PP\left(\sum_{i=0}^\infty A_i\geq a\cdot \phi(\lambda,\varepsilon)+\varepsilon\right)&\leq \PP\left(\sum_{i=0}^\infty A_i\geq \sum_{i=0}^{\phi(\lambda,\varepsilon)-1}A_i+\varepsilon\right)\\
&= \PP\left(\sum_{i=\phi(\lambda,\varepsilon)}^\infty A_i\geq \varepsilon\right)<\lambda
\end{aligned}
\end{equation*}
Part (b) follows from Lemma \ref{res:monotone:stochastic} since $\seq{\sum_{i=0}^n A_i}$ is monotone increasing.
\end{proof}

We conclude by showing how both learnable rates and moduli of uniform boundedness combine under arithmetic operations:

\begin{lemma}
\label{res:addtimes:stochastic}
Suppose that $\seq{X_n}$ and $\seq{Y_n}$ converge with learnable rates of uniform convergence $\phi$ and $\psi$, respectively. Then
\begin{enumerate}[(a)]

	\item a learnable rate of uniform convergence for $\seq{X_n+Y_n}$ is given by $$\chi(\lambda,\varepsilon):=\phi(\lambda/2,\varepsilon/2)+\psi(\lambda/2,\varepsilon/2)$$
	
	\item a learnable rate of uniform convergence for $\seq{X_nY_n}$ is given by $$\chi(\lambda,\varepsilon):=\phi\left(\frac{\lambda}{4},\frac{\varepsilon}{2\sigma(\lambda/4)}\right)+\psi\left(\frac{\lambda}{4},\frac{\varepsilon}{2\rho(\lambda/4)}\right)$$ where $\rho$ and $\sigma$ are moduli of uniform boundedness for $\seq{X_n}$ and $\seq{Y_n}$, respectively.

\end{enumerate}
\end{lemma}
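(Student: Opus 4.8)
The plan is to prove both parts from one template. In part~(a) the triangle inequality yields, for each index $n$, the event inclusion $\{\exists i,j\in[a_n;b_n](|(X_i+Y_i)-(X_j+Y_j)|\geq\varepsilon)\}\subseteq\{\exists i,j\in[a_n;b_n](|X_i-X_j|\geq\varepsilon/2)\}\cup\{\exists i,j\in[a_n;b_n](|Y_i-Y_j|\geq\varepsilon/2)\}$, so an $\varepsilon$-fluctuation of the sum on an interval forces an $\varepsilon/2$-fluctuation of $\seq{X_n}$ or of $\seq{Y_n}$ on the \emph{same} interval; in part~(b) the identity $X_iY_i-X_jY_j=(X_i-X_j)Y_i+X_j(Y_i-Y_j)$ gives an analogous inclusion once one intersects with the event that $\seq{X_n}$ and $\seq{Y_n}$ are bounded by their moduli. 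The task is then to locate one interval index $n$, not too large, on which neither of the two individual fluctuation events is probable. The device that makes this possible is the following remark about learnable rates: if $\theta$ is a learnable rate of uniform convergence for $\seq{Z_n}$, then for any increasing $a_0<b_0\leq a_1<b_1\leq\ldots$ and any $\mu,\delta\in(0,1)$, at most $\lfloor\theta(\mu,\delta)\rfloor$ indices $n$ can satisfy $\PP(\exists i,j\in[a_n;b_n](|Z_i-Z_j|\geq\delta))\geq\mu$ --- for, given $\lfloor\theta(\mu,\delta)\rfloor+1$ such indices $m_0<\cdots<m_{\lfloor\theta(\mu,\delta)\rfloor}$, the intervals $[a_{m_l};b_{m_l}]$ still form an admissible increasing sequence (pad it out to infinity), so Definition~\ref{def:learnable:uniform} returns a good index among them, a contradiction. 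This remark is exactly what is needed, since one cannot merely invoke $\phi$ and $\psi$ one after the other: they must be witnessed on a common interval, and the counting bound is what lets a pigeonhole over a window of indices achieve that.

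For part~(a): fix $\lambda,\varepsilon\in(0,1)$ and $a_0<b_0\leq a_1<b_1\leq\ldots$, and put $p:=\lfloor\phi(\lambda/2,\varepsilon/2)\rfloor$, $q:=\lfloor\psi(\lambda/2,\varepsilon/2)\rfloor$. If all $p+q+1$ indices $0,\dots,p+q$ were bad for $\seq{X_n+Y_n}$ at level $(\lambda,\varepsilon)$, then by subadditivity applied to the inclusion above each would be bad for $\seq{X_n}$ at level $(\lambda/2,\varepsilon/2)$ or bad for $\seq{Y_n}$ at level $(\lambda/2,\varepsilon/2)$; these two bad-sets would together cover $\{0,\dots,p+q\}$, so one would have more than $p$, respectively more than $q$, elements, contradicting the counting remark. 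Hence some $n\leq p+q\leq\chi(\lambda,\varepsilon)$ is good, and subadditivity gives $\PP(\exists i,j\in[a_n;b_n](|(X_i+Y_i)-(X_j+Y_j)|\geq\varepsilon))<\lambda/2+\lambda/2=\lambda$, as required.

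For part~(b): proceed as in~(a), but first set $R:=\{\sup_n|X_n|<\rho(\lambda/4)\}\cap\{\sup_n|Y_n|<\sigma(\lambda/4)\}$, so $\PP(\comp R)<\lambda/2$ by the two moduli of uniform boundedness. On $R$ one has $|X_iY_i-X_jY_j|\leq\sigma(\lambda/4)|X_i-X_j|+\rho(\lambda/4)|Y_i-Y_j|$, so on $R$ an $\varepsilon$-fluctuation of $\seq{X_nY_n}$ on $[a_n;b_n]$ forces an $\tfrac{\varepsilon}{2\sigma(\lambda/4)}$-fluctuation of $\seq{X_n}$ or an $\tfrac{\varepsilon}{2\rho(\lambda/4)}$-fluctuation of $\seq{Y_n}$ on $[a_n;b_n]$; running the pigeonhole of part~(a) with $p:=\lfloor\phi(\lambda/4,\tfrac{\varepsilon}{2\sigma(\lambda/4)})\rfloor$, $q:=\lfloor\psi(\lambda/4,\tfrac{\varepsilon}{2\rho(\lambda/4)})\rfloor$ and a budget of $\lambda/4$ for each individual event, then adding $\PP(\comp R)$, produces $n\leq p+q\leq\chi(\lambda,\varepsilon)$ with $\PP(\mathrm{bad}_n)\leq\PP(\mathrm{bad}_n\cap R)+\PP(\comp R)<(\lambda/4+\lambda/4)+\lambda/2=\lambda$. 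The only technical wrinkle is that $\tfrac{\varepsilon}{2\sigma(\lambda/4)}$ and $\tfrac{\varepsilon}{2\rho(\lambda/4)}$ must lie in $(0,1)$ for $\phi,\psi$ to apply, which is harmless since a modulus of uniform boundedness may always be enlarged to be $\geq 1$, making both scales $\leq\varepsilon/2<1$. I expect the only real content to be this $\varepsilon$/$\lambda$ bookkeeping together with the counting remark that forces the common interval; no single step should be a serious obstacle.
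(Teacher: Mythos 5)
Your proof is correct and is essentially the paper's own argument: the same triangle-inequality/product decomposition after restricting to the boundedness events, followed by a pigeonhole over the interval indices, where your ``counting remark'' is exactly the paper's step of passing to a subsequence of bad intervals and contradicting the defining property of $\phi$ or $\psi$. The extra care about $\tfrac{\varepsilon}{2\sigma(\lambda/4)}\in(0,1)$ is a harmless refinement (the paper implicitly assumes, as in its later application, that the moduli are $\geq 1$).
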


\begin{proof}
We only prove (b), as part (a) is analogous but simpler (and in any case proven as Lemma 7.3 of \cite{neri-powell:25:martingale}). Fix $\lambda,\varepsilon\in (0,1)$and define the events
\[
Q:=\sup_{n\in\NN}|X_n|<\rho\left(\frac{\lambda}{4}\right) \ \ \ \mbox{and} \ \ \ R:=\sup_{n\in\NN}|Y_n|<\sigma\left(\frac{\lambda}{4}\right) 
\]
Take any $a_0<b_0\leq a_1<b_1\leq \ldots$. Then for any $n\in\NN$ we have
\begin{equation*}
\begin{aligned}
&\PP\left(\exists i,j\in [a_n;b_n]\left(\vert X_iY_i-X_jY_j\vert\geq \varepsilon\right)\right)\\
&\leq \PP\left(\exists i,j\in [a_n;b_n]\left(\vert X_iY_i-X_jY_j\vert\geq \varepsilon\right)\cap R\cap Q\right)+\frac{\lambda}{2}
\end{aligned}
\end{equation*}
so it suffices to show that there exists some $n\leq \chi(\lambda,\varepsilon)$ such that
\begin{equation}
\label{eqn:prod}
\PP\left(\exists i,j\in [a_n;b_n]\left(\vert X_iY_i-X_jY_j\vert\geq \varepsilon\right)\cap R\cap Q\right)<\frac{\lambda}{2}.
\end{equation}
Take some $\omega$ in the set within the probability measure in (\ref{eqn:prod}) i.e. such that there exists $i:=i(\omega)$ and $j:=j(\omega)$ such that $\vert X_i(\omega) Y_i(\omega)-X_j(\omega)Y_j(\omega)\vert\geq \varepsilon$ and also $\omega \in R\cap Q$, and thus
\[
\varepsilon\leq \sigma(\lambda/4)\vert X_i(\omega)-X_j(\omega)|+\rho(\lambda/4) \vert Y_i(\omega)-Y_j(\omega)|
\]
which implies that $\sigma(\lambda/4)\vert X_i(\omega)-X_j(\omega)|\geq \varepsilon/2$ or $\rho(\lambda/4) \vert Y_i(\omega)-Y_j(\omega)|\geq \varepsilon/2$. Therefore if (\ref{eqn:prod}) fails for all $n\leq \chi(\lambda,\varepsilon)$, then we have either
\[
\PP\left(\exists i,j\in [a_n;b_n]\left(|X_i-X_j|\geq \frac{\varepsilon}{2\sigma(\lambda/4)}\right)\right)\geq \frac{\lambda}{4}
\]
or
\[
\PP\left(\exists i,j\in [a_n;b_n]\left(|Y_i-Y_j|\geq \frac{\varepsilon}{2\rho(\lambda/4)}\right)\right)\geq \frac{\lambda}{4}
\]
for each $n\leq \chi(\lambda,\varepsilon)$. As a consequence, there exists either a subsequence $a_{n_0}<b_{n_0}\leq a_{n_1}<b_{n_1}\leq \ldots$ such that
\[
\forall k\leq \phi\left(\frac{\lambda}{4},\frac{\varepsilon}{2\sigma(\lambda/4)}\right)\left[\PP\left(\exists i,j\in [a_{n_k};b_{n_k}]\left(|X_i-X_j|\geq \frac{\varepsilon}{2\sigma(\lambda/4)}\right)\right)\geq \frac{\lambda}{4}\right]
\]
or a subsequence $a_{m_0}<b_{m_0}\leq a_{m_1}<b_{m_1}\leq \ldots$ such that
\[
\forall k\leq \psi\left(\frac{\lambda}{4},\frac{\varepsilon}{2\rho(\lambda/4)}\right)\left[\PP\left(\exists i,j\in [a_{m_k};b_{m_k}]\left(|Y_i-Y_j|\geq \frac{\varepsilon}{2\rho(\lambda/4)}\right)\right)\geq \frac{\lambda}{4}\right]
\]
contradicting the defining properties of $\phi$ or $\psi$.
\end{proof}

\begin{lemma}
\label{res:addtimes:boundedness}
Suppose that $\seq{X_n}$ and $\seq{Y_n}$ have moduli of uniform boundedness $\rho$ and $\sigma$, respectively. Then
\begin{enumerate}[(a)]

	\item a modulus of uniform boundedness for $\seq{X_n+Y_n}$ is given by $$\tau(\lambda):=\rho\left(\lambda/2\right)+\sigma\left(\lambda/2\right)$$
	
	\item a modulus of uniform boundedness for $\seq{X_nY_n}$ is given by $$\tau(\lambda):=\rho\left(\lambda/2\right)\cdot\sigma\left(\lambda/2\right)$$

\end{enumerate}
\end{lemma}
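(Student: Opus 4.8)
The plan is to derive both parts from elementary pointwise inequalities combined with a union bound; no martingale machinery or limiting argument is needed. For part (a), I would first observe that the event $\{\sup_{n}|X_n+Y_n|\geq \rho(\lambda/2)+\sigma(\lambda/2)\}$ is contained in the union $\{\sup_n|X_n|\geq\rho(\lambda/2)\}\cup\{\sup_n|Y_n|\geq\sigma(\lambda/2)\}$. Indeed, if $|X_n(\omega)+Y_n(\omega)|\geq\rho(\lambda/2)+\sigma(\lambda/2)$ for some $n$, then by the triangle inequality $|X_n(\omega)|+|Y_n(\omega)|\geq\rho(\lambda/2)+\sigma(\lambda/2)$, and hence at least one of $|X_n(\omega)|\geq\rho(\lambda/2)$ or $|Y_n(\omega)|\geq\sigma(\lambda/2)$ must hold, so $\omega$ lies in one of the two sets on the right. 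Taking probabilities and using that $\rho,\sigma$ are moduli of uniform boundedness for $\seq{X_n}$ and $\seq{Y_n}$ bounds the measure by $\lambda/2+\lambda/2=\lambda$, which is exactly the required inequality.

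For part (b), I would run the identical argument with the product in place of the sum. The key pointwise fact is that if $|X_n(\omega)|\cdot|Y_n(\omega)|\geq\rho(\lambda/2)\cdot\sigma(\lambda/2)$, then it cannot be that simultaneously $|X_n(\omega)|<\rho(\lambda/2)$ and $|Y_n(\omega)|<\sigma(\lambda/2)$, since the product of two nonnegative reals each strictly smaller than a nonnegative bound is strictly smaller than the product of the bounds. Hence again at least one of $|X_n(\omega)|\geq\rho(\lambda/2)$ or $|Y_n(\omega)|\geq\sigma(\lambda/2)$ holds, $\{\sup_n|X_nY_n|\geq\rho(\lambda/2)\sigma(\lambda/2)\}$ is contained in the same union as before, and the union bound concludes.

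The only point requiring a word of care is the nonnegativity of $\rho(\lambda/2)$ and $\sigma(\lambda/2)$ invoked in part (b); but this is harmless, since any modulus of uniform boundedness can be replaced by $\max\{\rho,0\}$ without affecting its defining inequality (because $|X_n|\geq 0$ always holds), so we may assume $\rho,\sigma\geq 0$ from the outset. There is no genuine obstacle here: the lemma is a pure bookkeeping step, recorded so that moduli of uniform boundedness propagate through sums and products of stochastic processes in the same modular fashion as learnable rates do in Lemma \ref{res:addtimes:stochastic}, and both will be used together in the analysis of the Robbins-Siegmund recursion.
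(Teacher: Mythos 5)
Your proof is correct, and it is precisely the direct-from-the-definitions union-bound argument that the paper itself omits with the remark that the lemma ``follows directly from the definitions''. One cosmetic point: since the events in Definition \ref{def:uniform:boundedness} are phrased via suprema, the claimed inclusions are cleanest argued contrapositively --- if $\sup_n|X_n|<\rho(\lambda/2)$ and $\sup_n|Y_n|<\sigma(\lambda/2)$ then $\sup_n|X_n+Y_n|<\rho(\lambda/2)+\sigma(\lambda/2)$ and $\sup_n|X_nY_n|<\rho(\lambda/2)\cdot\sigma(\lambda/2)$ --- which sidesteps the (harmless) issue of the supremum not being attained at any single $n$ and also shows that any modulus is automatically positive, so your closing normalisation via $\max\{\rho,0\}$ is unnecessary.
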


We omit the proof, which follows directly from the definitions.

\section{The quantitative Robbins-Siegmund theorem}
\label{sec:RS:main}

We now state and prove our main theorem. We require the following metastable rate of convergence for supermartingales, which falls out of the general framework of \cite{neri-powell:25:martingale}:
\begin{theorem}[\cite{neri-powell:25:martingale}]
\label{res:supermartingale}
Let $\seq{U_n}$ be a nonnegative supermartingale with $\EE[U_0]<K$ for some $K>1$. Then $\seq{U_n}$ has learnable rate of uniform convergence given by
\begin{equation*}
\phi(\lambda,\varepsilon):=c\left(\frac{K}{\lambda \varepsilon}\right)^2
\end{equation*}
for a universal constant $c\leq 200$.
\end{theorem}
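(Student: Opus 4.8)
The plan is to derive the learnable rate directly from a quantitative form of Doob's upcrossing inequality for nonnegative supermartingales. The essential point is that one should invoke the sharp \emph{multiplicative} upcrossing estimate (Dubins' inequality) rather than the cruder additive bound: the additive bound only yields the weaker rate $O(K^2/(\lambda^3\varepsilon^2))$, whereas the multiplicative one produces the stated quadratic dependence on $1/\lambda$.

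First I would fix $\lambda,\varepsilon\in(0,1)$ and an increasing sequence $a_0<b_0\le a_1<b_1\le\ldots$, and set $R:=2K/\lambda$. Since $\seq{U_n}$ is a nonnegative supermartingale with $\EE[U_0]<K$, Doob's maximal inequality gives $\PP(\sup_n U_n\ge R)\le \EE[U_0]/R<\lambda/2$; write $G:=\{\sup_n U_n<R\}$. The argument proceeds by contradiction: suppose every index $n\le N:=\lfloor c(K/(\lambda\varepsilon))^2\rfloor$ is ``bad'', i.e.\ $\PP(\mathcal B_n)\ge\lambda$ for $\mathcal B_n:=\{\exists i,j\in[a_n;b_n]\,(|U_i-U_j|\ge\varepsilon)\}$. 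Then $\PP(\mathcal B_n\cap G)>\lambda/2$ for each such $n$, hence $(N+1)\,\lambda/2<\sum_{n=0}^N\PP(\mathcal B_n\cap G)$, and it suffices to bound the right-hand side by $C_0\,K^2/(\lambda\varepsilon^2)$ for an explicit constant $C_0$, since this forces $N+1<2C_0(K/(\lambda\varepsilon))^2$ and hence a contradiction once $c$ is chosen large enough; the point of the bookkeeping below is precisely to keep this $c$ below $200$.

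To get the upper bound I would discretise. For $0\le m\le M:=\lceil 3R/\varepsilon\rceil$ set $I_m:=[m\varepsilon/3,(m+1)\varepsilon/3]$, so $[0,R)$ is partitioned into blocks of length $\varepsilon/3$. On $G$ every path lies in $[0,R)$, and whenever $i<j$ lie in $[a_n;b_n]$ with $|U_i-U_j|\ge\varepsilon$ the path must cross some block $I_m$ --- upward or downward --- with both of the relevant times inside the window $[a_n;b_n]$ (using blocks of length $\varepsilon/3$ rather than $\varepsilon$ guarantees that a block fits strictly inside the gap). Since the windows $[a_n;b_n]$ are essentially disjoint, summing such crossing-indicators over $n=0,\ldots,N$ is dominated by the total number of up- (respectively down-) crossings of $I_m$ performed by $U_0,\ldots,U_{b_N}$; and for any sequence the number of down-crossings of an interval never exceeds the number of up-crossings plus one. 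Putting this together, $\sum_{n=0}^N\PP(\mathcal B_n\cap G)\le 2\sum_{m=0}^M\EE\big[\,\#\{\text{up-crossings of }I_m\text{ by }U_0,\ldots,U_{b_N}\}\,\big]+O(M)$, and everything reduces to bounding the expected number of up-crossings of a short interval by a nonnegative supermartingale.

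This last estimate is the crux, both conceptually and in the constant-chasing. Doob's upcrossing inequality alone gives only $\EE[\#\{\text{up-crossings of }I_m\}]\le \frac{m\varepsilon/3}{\varepsilon/3}=m$, and $\sum_{m\le M}m=O(M^2)=O((K/(\lambda\varepsilon))^2)$, which after dividing through by $\lambda/2$ delivers merely the inferior rate noted above. Dubins' inequality instead yields, for every $k\ge 1$, $\PP(\#\{\text{up-crossings of }I_m\}\ge k)\le\big(\tfrac{m}{m+1}\big)^{k-1}\min\!\big\{1,\tfrac{3K}{(m+1)\varepsilon}\big\}$, so summing the geometric series gives $\EE[\#\{\text{up-crossings of }I_m\}]\le\min\{m+1,\,3K/\varepsilon\}$. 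The uniform-in-$m$ bound $3K/\varepsilon$ is exactly the saving: $\sum_{m=0}^M\min\{m+1,3K/\varepsilon\}=O\!\big(M\cdot K/\varepsilon\big)=O\!\big(K^2/(\lambda\varepsilon^2)\big)$, which is the required bound $C_0 K^2/(\lambda\varepsilon^2)$ and closes the argument. What remains is purely arithmetical --- tracking constants through the discretisation, the near-disjointness of the windows, the down-versus-up-crossing comparison and the two regimes of the minimum, so as to certify $c\le 200$ --- together with the standard convention that an $\exists$ under $\PP$ abbreviates the corresponding countable union. Alternatively one could first split $U_n=M_n-A_n$ by Doob--Meyer, control the nondecreasing part $A_n$ via Lemma~\ref{res:monotone:stochastic} and the nonnegative martingale part $M_n$ by the same up-crossing argument, then recombine using Lemma~\ref{res:addtimes:stochastic}(a); this is closer to the general framework of \cite{neri-powell:25:martingale} but is not essentially shorter.
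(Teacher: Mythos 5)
Your argument is sound and follows the same overall architecture as the paper's proof: Ville's (maximal) inequality to confine the process to $[0,2K/\lambda]$ off an event of measure $\lambda/2$, a discretisation of that range into $\Theta(K/(\lambda\varepsilon))$ blocks of width $\Theta(\varepsilon)$, a reduction of an $\varepsilon$-fluctuation inside a window to a completed crossing of some block inside that window, and a counting argument by contradiction. Where you genuinely diverge is in the key estimate, the uniform-in-position bound of $O(K/\varepsilon)$ on the expected number of crossings of each block. You obtain it from Dubins' multiplicative upcrossing inequality; the paper obtains it from the additive Doob argument applied to \emph{downcrossings}: the betting strategy that is active while the process sits above $b$ and stops once it falls below $a$ gives, for a nonnegative supermartingale, $(b-a)\,\EE[D_N[a,b]]\leq \EE[U_0]-\EE[U_N]+\EE[(U_N-b)^+]\leq\EE[U_0]$, hence $\EE[D_N[a,b]]\leq\EE[U_0]/(b-a)$ \emph{independently of where $[a,b]$ sits}, and then $C_N[a,b]\leq 2D_N[a,b]+1$. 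Your claim that the additive bound ``only yields $O(K^2/(\lambda^3\varepsilon^2))$'' so that Dubins is necessary is therefore a misdiagnosis: it is only the additive \emph{upcrossing} bound $\EE[\beta_{[a,b]}]\leq a/(b-a)$ that degrades with the height of the block, and the downcrossing version already delivers exactly the saving you extract from Dubins. Both routes give the stated quadratic dependence on $1/(\lambda\varepsilon)$; Dubins' inequality is the sharper tool (it even yields geometric tails on the crossing counts), while the paper's is more elementary and self-contained. The remaining differences are cosmetic: the paper pigeonholes over the $p+1$ blocks to isolate a single most-crossed one rather than summing over all of them, and anchors the fluctuation at $U_{a_n}$ with blocks of width $\varepsilon/4$ instead of your $\varepsilon/3$. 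Your deferred bookkeeping (the $m=0$ block where Dubins degenerates, the near-disjointness of windows, and the verification that $c\leq 200$) is routine and does not conceal a gap.
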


This follows by adapting \cite[Theorem 7.2]{neri-powell:25:martingale}, and a proof in given in Appendix \ref{app:supermartingale}.

\begin{theorem}
[Quantitative Robbins-Siegmund theorem]
\label{res:RS}
Let $\seq{X_n}$, $\seq{A_n}$, $\seq{B_n}$, $\seq{C_n}$ be nonnegative integrable stochastic processes adapted to some filtration $\seq{\mathcal{F}_n}$ such that
\[
\EE[X_{n+1}\mid \mathcal{F}_n]\leq (1+A_n)X_n-B_n+C_n
\]
almost surely for all $n\in\NN$. Suppose that $K>\EE[X_0]$ for $K>1$ and that $\rho,\sigma:(0,1)\to [1,\infty)$ are nonincreasing and satisfy
\[
\PP\left(\prod_{i=0}^\infty (1+A_i)\geq \rho(\lambda)\right)<\lambda \ \ \ \mbox{and} \ \ \ \PP\left(\sum_{i=0}^\infty C_i\geq \sigma(\lambda)\right)<\lambda
\]
for all $\lambda\in (0,1)$. Then $\seq{X_n}$ converges almost surely, with learnable rate of uniform convergence
\[
\phi(\lambda,\varepsilon):=\bar{c}\cdot\left(\frac{\rho\left(\frac{\lambda}{8}\right)\cdot \left(K+\sigma\left(\frac{\lambda}{16}\right)\right)}{\lambda\varepsilon}\right)^2
\]
where $\bar{c}>0$ is a constant that can be computed from that in Theorem \ref{res:supermartingale}, and $\sum_{i=0}^\infty B_i<\infty$ almost surely, with modulus of uniform boundedness
\[
\chi(\lambda):=\frac{10\cdot \rho\left(\frac{\lambda}{2}\right)\cdot \left(K+\sigma\left(\frac{\lambda}{8}\right)\right)}{\lambda}.
\]
\end{theorem}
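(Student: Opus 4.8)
The plan is to mimic the classical proof of the Robbins-Siegmund theorem, which passes to an auxiliary supermartingale, but to do everything quantitatively using the lemmas assembled in Section \ref{sec:convergence}. Set $\Gamma_n:=\prod_{i=0}^{n-1}(1+A_i)$ (so $\Gamma_0=1$) and define
\[
U_n:=\frac{X_n}{\Gamma_n}+\sum_{i=n}^{\infty}\frac{C_i}{\Gamma_{i+1}}.
\]
The first step is to check that $\seq{U_n}$ is a nonnegative supermartingale with respect to $\seq{\mathcal F_n}$: since $\Gamma_{n+1}=(1+A_n)\Gamma_n$ and $\Gamma_n,\ C_i$ are $\mathcal F_n$-measurable for $i\geq n$, the recursive inequality gives $\EE[X_{n+1}/\Gamma_{n+1}\mid\mathcal F_n]\leq X_n/\Gamma_n - B_n/\Gamma_{n+1} + C_n/\Gamma_{n+1}$, and adding $\EE[\sum_{i=n+1}^\infty C_i/\Gamma_{i+1}\mid\mathcal F_n]$ to both sides yields $\EE[U_{n+1}\mid\mathcal F_n]\leq U_n - B_n/\Gamma_{n+1}$. (One should note that integrability of $U_n$, equivalently of the tail sum, follows from the hypotheses together with the fact that the tail is dominated by $\sum_i C_i$; this is a routine point but worth a sentence.) The second step is to bound $\EE[U_0]=\EE[X_0]+\EE[\sum_{i=0}^\infty C_i/\Gamma_{i+1}]\leq \EE[X_0]+\EE[\sum_{i=0}^\infty C_i]$; but $\EE[\sum_i C_i]$ need not be finite, so instead I would work on the event $R:=\{\prod_i(1+A_i)<\rho(\lambda/8)\ \text{and}\ \sum_i C_i<\sigma(\lambda/16)\}$, which by the hypotheses on $\rho,\sigma$ and a union bound has $\PP(\comp R)<\lambda/8+\lambda/16$; on $R$ we have $U_n\leq X_n+\sum_i C_i$ and the process is controlled in the right range.

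The third step handles the supermartingale $\seq{U_n}$ quantitatively. The obstruction is that Theorem \ref{res:supermartingale} needs a deterministic bound $\EE[U_0]<K$, which we do not have; the standard workaround is localization. I would truncate: fix $\lambda,\varepsilon$, put $K':=K+\sigma(\lambda/16)$, and consider the stopped process or, more simply, condition on the event $R$ above and apply the supermartingale result to a suitably normalized process. Concretely, since $\seq{U_n}$ is a nonnegative supermartingale, by Doob's maximal inequality $\PP(\sup_n U_n\geq t\EE[U_0'])$ is small for a version $U_0'$ with controlled expectation; combined with the fact that on $R$ the process $\seq{X_n/\Gamma_n}$ and $\seq{U_n}$ agree up to an additive term bounded by $\sigma(\lambda/16)$, one obtains that $\seq{U_n}$ restricted to $R$ behaves like a supermartingale with initial expectation at most $K'$ up to a further $\lambda$-loss. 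Applying Theorem \ref{res:supermartingale} (with the constant $c\leq 200$, adjusting $\lambda,\varepsilon$ by absolute factors to absorb the $O(\lambda)$ losses incurred above) gives a learnable rate of uniform convergence $\bar c\,(K'/(\lambda\varepsilon))^2$ for $\seq{U_n}$.

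The fourth step transfers the conclusion from $\seq{U_n}$ back to $\seq{X_n}$. On the event $R$ we have $X_n=\Gamma_n\big(U_n-\sum_{i=n}^\infty C_i/\Gamma_{i+1}\big)$. The sequence $\seq{\Gamma_n}$ is nondecreasing and bounded by $\rho(\lambda/8)$ on $R$, hence has a learnable rate of uniform convergence by Lemma \ref{res:monotone:stochastic} (with modulus of uniform boundedness $\rho$), and it has modulus of uniform boundedness $\rho$. The tail sum $\seq{\sum_{i=n}^\infty C_i/\Gamma_{i+1}}$ is nonincreasing and bounded by $\sigma(\lambda/16)$ on $R$, so — by the monotone lemma applied to its negative, or directly — it too has a learnable rate of uniform convergence. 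Therefore $U_n-\sum_{i\geq n}C_i/\Gamma_{i+1}$ converges with a learnable rate by Lemma \ref{res:addtimes:stochastic}(a), it is bounded (modulus from Lemma \ref{res:addtimes:boundedness}), and multiplying by the convergent bounded sequence $\seq{\Gamma_n}$ and invoking Lemma \ref{res:addtimes:stochastic}(b) produces a learnable rate of uniform convergence for $\seq{X_n}$; tracking the $\lambda,\varepsilon$ substitutions through parts (a) and (b) and the factors of $\rho(\lambda/8)$ yields exactly $\phi(\lambda,\varepsilon)=\bar c\,(\rho(\lambda/8)(K+\sigma(\lambda/16))/(\lambda\varepsilon))^2$ after redefining $\bar c$. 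For the claim $\sum_i B_i<\infty$: telescoping the supermartingale inequality gives $\EE[\sum_{n=0}^N B_n/\Gamma_{n+1}]\leq \EE[U_0]$, hence on $R$, where $\Gamma_{n+1}\leq\rho$, we get $\EE[I_R\sum_n B_n]\leq \rho(\lambda/2)(K+\sigma(\lambda/8))$; Markov's inequality then yields $\PP(\sum_n B_n\geq t)\leq \PP(\comp R)+\rho(\lambda/2)(K+\sigma(\lambda/8))/t$, and choosing $\PP(\comp R)$ and the Markov term each to be roughly $\lambda/2$ (taking $t=\chi(\lambda)$ with the stated constant $10$) gives the modulus of uniform boundedness $\chi$. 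The main obstacle throughout is the fourth step's bookkeeping — ensuring the repeated halving of $\lambda$ and the division of $\varepsilon$ by the boundedness moduli compose to the precise constants claimed — together with the localization in step three, which must be done carefully enough that the deterministic hypothesis of Theorem \ref{res:supermartingale} is legitimately met on a set of probability $1-O(\lambda)$.
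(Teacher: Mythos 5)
Your overall strategy (normalize by $\prod(1+A_i)$, reduce to a supermartingale, apply Theorem \ref{res:supermartingale}, then transfer back to $\seq{X_n}$ with Lemmas \ref{res:monotone:stochastic}, \ref{res:addtimes:stochastic} and \ref{res:addtimes:boundedness}) is the right one, and your fourth step is essentially the paper's. But the construction of the auxiliary supermartingale has a genuine gap. Your $U_n:=X_n/\Gamma_n+\sum_{i=n}^{\infty}C_i/\Gamma_{i+1}$ is not adapted: the tail sum depends on $C_i$ for $i>n$, which are $\mathcal F_i$-measurable, not $\mathcal F_n$-measurable (your assertion that ``$C_i$ are $\mathcal F_n$-measurable for $i\geq n$'' is false), so the displayed supermartingale inequality does not even parse without replacing the tail by a conditional expectation. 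Doing that runs into the second problem you yourself notice but do not resolve: the hypotheses only give $\sum_i C_i<\infty$ almost surely with a modulus in probability, so $\EE\bigl[\sum_i C_i\bigr]$ may be infinite and $U_0$ need not be integrable. Your proposed fixes do not repair this. Conditioning on the event $R$ destroys the supermartingale property, since $R$ involves the whole infinite product and sum and is not $\mathcal F_n$-measurable; and a stopping time cannot help either, because the non-integrable quantity already sits inside $U_0$ itself ($U_{0\wedge T}=U_0$). The phrase ``behaves like a supermartingale with initial expectation at most $K'$ up to a further $\lambda$-loss'' is exactly the statement that needs a rigorous construction, and none is given.

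The paper resolves this by running the reduction in the opposite direction: it sets $U_n:=X_n/P_n-\sum_{i=0}^{n-1}C_i/P_{i+1}$, which uses \emph{partial} sums and is therefore adapted and integrable with $\EE[U_0]=\EE[X_0]<K$ deterministically; the price is that $U_n$ may be negative, and this is repaired by the genuine stopping time $T_x:=\inf\{n:\sum_{i=0}^{n}\tilde C_i>x\}$ (legitimate because the partial sums are adapted), so that $\seq{U_{n\wedge T_x}+x}$ is a nonnegative supermartingale with $\EE[U_{0\wedge T_x}+x]<K+x$, to which Theorem \ref{res:supermartingale} and Ville's inequality apply; one then passes back to the unstopped process on the event $\{\sum_i\tilde C_i<\sigma(\lambda/2)\}$, where $T_{\sigma(\lambda/2)}=\infty$. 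Your argument for $\sum_i B_i$ has the same defect in miniature: the telescoped bound $\EE\bigl[\sum_{n\leq N}B_n/\Gamma_{n+1}\bigr]\leq\EE[U_0]$ involves a possibly infinite right-hand side with your $U_0$, and the step from it to $\EE\bigl[I_R\sum_n B_n\bigr]\leq\rho(\lambda/2)(K+\sigma(\lambda/8))$ is unjustified, since an unconditional expectation inequality cannot simply be restricted to the non-adapted event $R$. The paper instead introduces a second supermartingale $V_n:=U_n+\sum_{i=0}^{n-1}\tilde B_i$, stops it with the same $T_x$, and uses Ville's inequality on $\seq{V_{n\wedge T_x}+x}$ to obtain the modulus of uniform boundedness $\chi$. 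You should rebuild steps one through three (and the $\sum_i B_i$ argument) along these lines; your transfer step four can then stand essentially as written.
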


\begin{proof}
Our strategy is to analyse the standard proof of the result (as in e.g. \cite{robbins-siegmund:71:lemma}) making use of the quantitative lemmas we have already established. First of all, we define
\[
P_n:=\prod_{i=0}^{n-1}(1+A_i), \ \ \ \tilde X_n:=\frac{X_n}{P_n}, \ \ \ \tilde B_n:=\frac{B_n}{P_{n+1}}, \ \ \ \tilde C_n:=\frac{C_n}{P_{n+1}}
\]
with $P_0=1$. Since $\seq{B_n}$ is nonnegative we have
\begin{equation*}
\begin{aligned}
\EE[X_{n+1}\mid \mathcal F_n]\leq (1+A_n)X_n+C_n
\end{aligned}
\end{equation*}
almost surely, and it is easy to show that the sequence $(U_n)$ defined by
\[
U_n:=\tilde X_n-\sum_{i=0}^{n-1} \tilde C_i
\]
with $U_0:=\tilde X_0$ is a supermartingale. For $x>0$ define the stopping time $T_x\in \NN\cup\{\infty\}$ by
\[
T_x:=\inf\left\{n\, : \, \sum_{i=0}^n \tilde C_i>x\right\}.
\]
It is a standard fact that the stopped process $\seq{U_{n\wedge T_x}}$ (where $n\wedge m:=\min\{n,m\}$) is also a supermartingale, and therefore $\seq{U_{n\wedge T_x}+x}$ is a \emph{nonnegative} supermartingale, since on $\{T_x<\infty\}$ we have
\begin{equation*}
\begin{aligned}
U_{n\wedge T_x}+x=\tilde X_{n\wedge T_x}-\sum_{i=0}^{n\wedge T_x-1}\tilde C_i+x\geq \tilde X_{n\wedge T_x}-\sum_{i=0}^{T_x-1}\tilde C_i+x\geq \tilde X_{n\wedge T_x}\geq 0
\end{aligned}
\end{equation*}
and on $\{T_x=\infty\}$ we have $\sum_{i=0}^{n-1}\tilde C_i\leq x$ for all $n\in\NN$, and so also $U_{n\wedge T_x}+x\geq 0$. Noting that $\EE[U_{0\wedge T_x}+x]=\EE[\tilde X_0]+x<K+x$, by Theorem \ref{res:supermartingale} a learnable rate of uniform convergence for $\seq{U_{n\wedge T_x}+x}$ is given by
\[
\phi_1^x(\lambda,\varepsilon):=c\left(\frac{K+x}{\lambda\varepsilon}\right)^2
\]
where $c>0$ is the constant from that theorem. Clearly, this must then also be a learnable rate of uniform convergence for $\seq{U_{n\wedge T_x}}$. Our next step is to find a rate for the unstopped sequence $\seq{U_n}$. To this end, for $\lambda\in (0,1)$ define the event
\[
Q_\lambda:=\sum_{i=0}^\infty \tilde C_i<\sigma\left(\frac{\lambda}{2}\right)
\]
noting that since $\tilde C_i\leq C_i$ we have $\PP(Q_\lambda^c)<\lambda/2$. We now define
\[
\phi_1(\lambda,\varepsilon):=\phi^{\sigma(\lambda/2)}_1(\lambda/2,\varepsilon)=4c\left(\frac{K+\sigma(\lambda/2)}{\lambda\varepsilon}\right)^2
\] 
Then for any $\lambda,\varepsilon\in (0,1)$ and $a_0<b_0\leq a_1<b_1\leq \ldots$ there exists some $n\leq \phi_1(\lambda,\varepsilon)$ such that
\[
\PP\left(\exists i,j\in [a_n;b_n]\left(\vert U_{i\wedge T_{\sigma(\lambda/2)}}-U_{j\wedge T_{\sigma(\lambda/2)}}\vert \geq\varepsilon\right)\right)<\lambda/2
\]
Now, we have
\[
\PP\left(\exists i,j\in [a_n;b_n]\left(\vert U_{i}-U_{j}\vert \geq\varepsilon\right)\right)\leq \PP\left(\exists i,j\in [a_n;b_n]\left(\vert U_{i}-U_{j}\vert \geq\varepsilon\right)\cap Q_{\lambda}\right)+\lambda/2
\]
and if $\omega\in Q_\lambda$ then $T_{\sigma(\lambda/2)}(\omega)=\infty$ and thus $U_{n\wedge T_{\sigma(\lambda/2)}(\omega)}(\omega)=U_n(\omega)$ for all $n\in\NN$, and hence
\begin{equation*}
\begin{aligned}
&\PP\left(\exists i,j\in [a_n;b_n]\left(\vert U_{i}-U_{j}\vert \geq\varepsilon\right)\cap Q_{\lambda}\right)\\
&\leq \PP\left(\exists i,j\in [a_n;b_n]\left(\vert U_{i\wedge T_{\sigma(\lambda/2)}}-U_{j\wedge T_{\sigma(\lambda/2)}}\vert \geq\varepsilon\right)\right)<\lambda/2
\end{aligned}
\end{equation*}
from which it follows that $\phi_1$ is a learnable rate of uniform convergence for $\seq{U_n}$. A similar argument allows us to exhibit a rate of uniform boundedness for $\seq{U_n}$: By Ville's inequality, a modulus of uniform boundedness for the positive supermartingale $\seq{U_{n\wedge T_x}+x}$ is given by
\[
\chi_1^x(\lambda):=\frac{K+x}{\lambda}
\]
It follows that
\[
\PP\left(\sup_{n\in\NN}\vert U_{n\wedge T_x}\vert \geq \frac{2(K+x)}{\lambda}\right)\leq \PP\left(\sup_{n\in\NN}\left(U_{n\wedge T_x}+x\right)\geq \frac{2(K+x)}{\lambda}\right)<\frac{\lambda}{2}
\]
where for the first inequality we note that for any $y> x$, if $|U_{n\wedge T_x(\omega)}(\omega)|\geq y$ then $|U_{n\wedge T_x(\omega)}(\omega)|=U_{n\wedge T_x(\omega)}(\omega)$ since $-U_{n\wedge T_x(\omega)}(\omega)\leq x$, and so in particular $U_{n\wedge T_x(\omega)}(\omega)+x\geq U_{n\wedge T_x(\omega)}(\omega)\geq y$, and since $2(K+x)/\lambda>x$ the inequality on suprema holds. Finally, we have
\begin{equation*}
\begin{aligned}
\PP\left(\sup_{n\in\NN}|U_n|\geq \frac{2(K+\sigma(\lambda/2))}{\lambda}\right)&\leq \PP\left(\left[\sup_{n\in\NN}|U_n|\geq \frac{2(K+\sigma(\lambda/2))}{\lambda}\right]\cap Q_\lambda\right)+\frac{\lambda}{2}\\
&\leq\PP\left(\sup_{n\in\NN}|U_{n\wedge T_{\sigma(\lambda/2)}}|\geq \frac{2(K+\sigma(\lambda/2))}{\lambda}\right)+\frac{\lambda}{2}<\lambda
\end{aligned}
\end{equation*}
and so
\[
\chi_1(\lambda):=\frac{2(K+\sigma(\lambda/2))}{\lambda}
\]
is a modulus of uniform boundedness for $\seq{U_n}$. We now conclude the proof using repeated applications of Lemmas \ref{res:addtimes:stochastic} and \ref{res:addtimes:boundedness}.

First, since $\tilde X_n=U_n+\sum_{i=0}^{n-1}\tilde C_i$, and by $\tilde C_i\leq C_i$, $\sigma$ is also a modulus of uniform boundedness for $\seq{\sum_{i=0}^{n-1}\tilde C_n}$, by Lemma \ref{res:addtimes:boundedness} (a) a modulus of uniform boundedness for $\seq{\tilde X_n}$ is given by
\[
\chi_2(\lambda):=\frac{4(K+\sigma(\lambda/4))}{\lambda}+\sigma(\lambda/2)
\]
and using Lemma \ref{res:sums:converge} (b) in addition to Lemma \ref{res:addtimes:stochastic} (a) a learnable rate of uniform convergence for $\seq{\tilde X_n}$ is given by
\[
\phi_2(\lambda,\varepsilon):=64c\left(\frac{K+\sigma(\lambda/4)}{\lambda\varepsilon}\right)^2+\frac{8\sigma(\lambda/4)}{\lambda\varepsilon}
\] 
For the final step, since $X_n=\tilde X_nP_n$ and $\rho$ is by definition a modulus of uniform boundedness for $\seq{P_n}$, by Lemma \ref{res:monotone:stochastic} and monotonicity of $\seq{P_n}$ a learnable rate of uniform convergence for the sequence is given by $2\rho(\lambda/2)/\lambda\varepsilon$, by Lemma \ref{res:addtimes:stochastic} (b) a learnable rate of uniform convergence for $\seq{X_n}$ is given by any bound on
\[
\phi_2\left(\frac{\lambda}{4},\frac{\varepsilon}{2\rho(\lambda/4)}\right)+\frac{16\cdot\chi_2(\lambda/4)\cdot\rho(\lambda/8)}{\lambda\varepsilon}
\]
The simplified bound in the statement of the theorem then follows in a crude way by bringing together terms, and using the assumptions $\lambda,\varepsilon\in (0,1)$ and that $\rho,\sigma$ are nonincreasing with $\rho(\lambda),\sigma(\lambda)\geq 1$. To obtain the modulus of uniform boundedness on $\sum_{i=0}^\infty B_i$, we define
\[
V_n:=\tilde X_n-\sum_{i=0}^{n-1}(\tilde C_i-\tilde B_i)=U_n+\sum_{i=0}^{n-1} \tilde B_i
\]
with $V_0:=\tilde X_0$. By an essentially identical argument to that for $\seq{U_n}$, we can show that $\seq{V_n}$ is a supermartingale, and defining the stopping time $T_x$ just as before and observing that because $\seq{B_n}$ is nonnegative is it also the case that
\[
V_{n\wedge T_x}+x=U_{n\wedge T_x}+\sum_{i=0}^{n\wedge T_x-1}\tilde B_i+x\geq U_{n\wedge T_x}+x\geq 0
\]
on $\Omega$, it follows that $\seq{V_{n\wedge T_x}+x}$ is a nonnegative supermartingale with $\EE[V_{0\wedge T_x}+x]<K+x$ with modulus of uniform boundedness $\chi_1^x=(K+x)/\lambda$ as defined above. Then just as before, for $\omega\in Q_\lambda$ we have $T_x(\omega)=\infty$ and thus $V_{n\wedge T_x(\omega)}(\omega)=V_n(\omega)$, and so by an identical argument, $\chi_1$ as defined above is a modulus of uniform boundedness for $\seq{V_n}$. We now apply Lemma \ref{res:addtimes:boundedness} several times: Since 
\[
\sum_{i=0}^{n-1}\tilde B_i=V_n+(-U_n)\leq V_n+\sum_{i=0}^{n-1} C_i 
\]
then
\[
\chi_3(\lambda):=\frac{5(K+\sigma(\lambda/4))}{\lambda}\geq \chi_1(\lambda/2)+\sigma(\lambda/2)
\]
is a modulus of uniform boundedness for $\seq{\sum_{i=0}^{n-1}\tilde B_i}$. Finally, since
\[
\sum_{i=0}^{n-1} B_i=\sum_{i=0}^{n-1} \tilde B_iP_{i+1}\leq P_n\sum_{i=0}^{n-1}\tilde B_i
\]
a modulus of uniform boundedness for $\seq{\sum_{i=0}^{n-1}B_i}$ is given by Lemma \ref{res:addtimes:boundedness} as
\[
\chi(\lambda):=\rho(\lambda/2)\cdot \chi_3(\lambda/2)
\]
Making this explicit yields the second part of the theorem. 
\end{proof}

As a byproduct of the proof above, we obtain a modulus of uniform boundedness for $\seq{X_n}$.

\begin{corollary}
\label{cor:RS:bounded}
Under the assumptions of Theorem \ref{res:RS}, a modulus of uniform boundedness for $\seq{X_n}$ is given by
\[
\tau(\lambda):=\frac{9\left(K+\sigma\left(\tfrac{\lambda}{8}\right)\right)\cdot \rho\left(\tfrac{\lambda}{2}\right)}{\lambda}
\]
\end{corollary}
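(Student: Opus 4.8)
The plan is to extract the modulus for $\seq{X_n}$ directly from the intermediate quantitative data already produced in the proof of Theorem \ref{res:RS}, leaving only a cosmetic simplification of constants. Recall that in that proof we verified that $\rho$ is a modulus of uniform boundedness for $\seq{P_n}$ (using that $\seq{P_n}$ is nondecreasing with $\sup_{n\in\NN}P_n=\prod_{i=0}^\infty(1+A_i)$), and that
\[
\chi_2(\lambda):=\frac{4(K+\sigma(\lambda/4))}{\lambda}+\sigma(\lambda/2)
\]
is a modulus of uniform boundedness for $\seq{\tilde X_n}$. Since $X_n=\tilde X_nP_n$ pointwise, I would then apply Lemma \ref{res:addtimes:boundedness}(b) to conclude that $\lambda\mapsto\chi_2(\lambda/2)\cdot\rho(\lambda/2)$ is a modulus of uniform boundedness for $\seq{X_n}$.

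It remains only to bound this quantity by the $\tau$ in the statement, using the elementary fact that enlarging a modulus of uniform boundedness preserves the defining inequality. Here $\chi_2(\lambda/2)=\tfrac{8(K+\sigma(\lambda/8))}{\lambda}+\sigma(\lambda/4)$, and since $\lambda\in(0,1)$, $\sigma$ is nonincreasing with values in $[1,\infty)$, and $K>1$, we have $\sigma(\lambda/4)\leq\sigma(\lambda/8)\leq K+\sigma(\lambda/8)\leq\tfrac{K+\sigma(\lambda/8)}{\lambda}$, whence $\chi_2(\lambda/2)\leq\tfrac{9(K+\sigma(\lambda/8))}{\lambda}$; multiplying through by $\rho(\lambda/2)$ yields the claimed $\tau$.

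I do not expect any genuine obstacle here, since the argument is pure bookkeeping on top of the proof of Theorem \ref{res:RS}. The only points requiring care are tracking which arguments $\rho$ and $\sigma$ get evaluated at as the halving of $\lambda$ in Lemma \ref{res:addtimes:boundedness} is unwound, and checking that the structural hypotheses on $\rho$ and $\sigma$ (monotonicity and the lower bound $1$) indeed suffice for the final crude step that absorbs the additive $\sigma(\lambda/4)$ term into the $1/\lambda$ factor.
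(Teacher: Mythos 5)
Your proposal is correct and follows the paper's own argument exactly: take the modulus $\chi_2$ for $\seq{\tilde X_n}$ and $\rho$ for $\seq{P_n}$ from the proof of Theorem \ref{res:RS}, apply Lemma \ref{res:addtimes:boundedness}(b) to $X_n=\tilde X_nP_n$ to get $\chi_2(\lambda/2)\cdot\rho(\lambda/2)$, and check that $\tau(\lambda)$ dominates this quantity. Your unwinding $\chi_2(\lambda/2)=\tfrac{8(K+\sigma(\lambda/8))}{\lambda}+\sigma(\lambda/4)\leq\tfrac{9(K+\sigma(\lambda/8))}{\lambda}$ is the same crude absorption the paper performs, so there is nothing to add.
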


\begin{proof}
Letting $\chi_2$ and $\rho$ be the moduli of uniform boundedness for $\seq{\tilde X_n}$ and $\seq{P_n}$ as defined in the proof of Theorem \ref{res:RS}, by Lemma \ref{res:addtimes:boundedness}, $\chi_2(\lambda/2)\cdot\rho(\lambda/2)$ is a modulus of uniform boundedness for $X_n=\tilde X_nP_n$, and the result follows by substituting in the definition of $\chi_2$ and observing that $\tau(\lambda)\geq \chi_2(\lambda/2)\cdot\rho(\lambda/2)$.
\end{proof}

We conclude by noting that the nonstochastic counterpart of Theorem \ref{res:RS:original} (given as Lemma 5.31 of \cite{bauschke-combettes:17:book}) is of interest in its own right - see in particular \cite[Section 3]{franci-grammatico:convergence:survey:22} or the recent \cite{arakcheev-bauschke:pp:opial}. A quantitative version of this nonstochastic Robbins-Siegmund\footnote{The special case $\beta_n=0$ corresponds to a well-known lemma of Qihou \cite[Lemma 2]{qihou:01:nonexpansive}, which has already been analysed by Kohlenbach and Lambov in \cite[Lemma 16]{kohlenbach-lambov:04:asymptotically:nonexpansive}.} theorem follows immediately from Theorem \ref{res:RS}, though simplifying the analysis results in the following improved bounds:
\begin{theorem}
\label{res:RS:nonstochastic}
Let $\seq{x_n}$, $\seq{\alpha_n}$, $\seq{\beta_n}$ and $\seq{\gamma_n}$ be sequences of nonnegative reals with
\[
x_{n+1}\leq (1+\alpha_n)x_n-\beta_n+\gamma_n
\]
for all $n\in\NN$. Suppose that $K,L,M>0$ satisfy $x_0<K$, $\prod_{i=0}^\infty (1+\alpha_i)<L$ and $\sum_{i=0}^\infty \gamma_i<M$. Then $\seq{x_n}$ converges with learnable rate
\[
\phi(\varepsilon):=\frac{8L(K+M)}{\varepsilon}
\]
and moreover $\sum_{i=0}^\infty \beta_i<L(K+M)$.
\end{theorem}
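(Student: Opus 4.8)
The plan is to analyse the standard proof of the deterministic Robbins--Siegmund lemma directly, rather than deducing it from Theorem~\ref{res:RS}, since in the nonstochastic setting the supermartingale apparatus degenerates into elementary facts about monotone and bounded-variation sequences that can be handled via Lemma~\ref{res:monotone} and the combination lemmas of Section~\ref{sec:convergence}; this direct route is also what lets us sharpen the constant. First I would make the usual substitution: put $P_n:=\prod_{i=0}^{n-1}(1+\alpha_i)$ (so $P_0=1$), $\tilde x_n:=x_n/P_n$, $\tilde\beta_n:=\beta_n/P_{n+1}$ and $\tilde\gamma_n:=\gamma_n/P_{n+1}$. Dividing the hypothesis through by $P_{n+1}=P_n(1+\alpha_n)$ gives $\tilde x_{n+1}\le\tilde x_n-\tilde\beta_n+\tilde\gamma_n$; moreover, since $\seq{\alpha_n}$ is nonnegative the partial products are nondecreasing with $1\le P_n\le\prod_{i=0}^\infty(1+\alpha_i)<L$, hence $\tilde\beta_n\le\beta_n$, $\tilde\gamma_n\le\gamma_n$ and $\sum_{i=0}^\infty\tilde\gamma_i<M$, and by induction from $\tilde x_{n+1}\le\tilde x_n+\tilde\gamma_n$ and $\tilde x_0=x_0<K$ we obtain $0\le\tilde x_n<K+M$ for all $n$. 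Note also that $\seq{P_n}$ is nondecreasing, nonnegative and bounded by $L$, so by Lemma~\ref{res:monotone} it has learnable rate $L/\varepsilon$.

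For the estimate on $\sum_i\beta_i$ I would telescope: rearranging to $\tilde\beta_n\le\tilde x_n-\tilde x_{n+1}+\tilde\gamma_n$ and summing over $n<N$ gives $\sum_{n<N}\tilde\beta_n\le\tilde x_0-\tilde x_N+\sum_{n<N}\tilde\gamma_n<K+M$, and therefore $\sum_{n<N}\beta_n=\sum_{n<N}\tilde\beta_nP_{n+1}\le\big(\prod_{i=0}^\infty(1+\alpha_i)\big)\sum_{n<N}\tilde\beta_n<L(K+M)$ uniformly in $N$. Since $\prod_i(1+\alpha_i)$ is a fixed constant strictly below $L$, the intermediate bound $\big(\prod_i(1+\alpha_i)\big)(K+M)$ is itself strictly below $L(K+M)$, so letting $N\to\infty$ yields $\sum_{i=0}^\infty\beta_i<L(K+M)$ with the claimed strict inequality.

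For the learnable rate of $\seq{x_n}$ I would first bound the total variation of $\seq{\tilde x_n}$. Writing $\delta_n:=\tilde x_{n+1}-\tilde x_n$, the inequality $\delta_n\le\tilde\gamma_n-\tilde\beta_n\le\tilde\gamma_n$ gives $\sum_{n<N}\delta_n^+\le\sum_{n<N}\tilde\gamma_n<M$, and since $\sum_{n<N}\delta_n=\tilde x_N-\tilde x_0$ we get $\sum_{n<N}\delta_n^-=\sum_{n<N}\delta_n^+-\tilde x_N+\tilde x_0<M+K$, hence $\sum_{n<N}|\delta_n|<K+2M$. A sequence of total variation at most $V$ has learnable rate $V/\varepsilon$ — this is the counting argument behind Lemma~\ref{res:monotone}, since pairwise disjoint intervals $[a_n;b_n]$ each carrying an $\varepsilon$-fluctuation each contribute at least $\varepsilon$ to the variation — so $\seq{\tilde x_n}$, of total variation at most $K+2M$, has learnable rate $(K+2M)/\varepsilon$. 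Finally, since $\sup_n|\tilde x_n|\le K+M$ and $\sup_n|P_n|\le L$, applying Lemma~\ref{res:addtimes:stochastic}(b) (specialised to deterministic sequences) to $x_n=\tilde x_nP_n$ gives a learnable rate of
\[
\frac{(K+2M)\cdot 2L}{\varepsilon}+\frac{L\cdot 2(K+M)}{\varepsilon}=\frac{2L(2K+3M)}{\varepsilon}\le\frac{8L(K+M)}{\varepsilon},
\]
which is the bound in the statement.

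I expect the only step requiring genuine care to be driving the constant down to exactly $8$. The obvious move of imitating the proof of Theorem~\ref{res:RS} — writing $\tilde x_n=u_n+\sum_{i<n}\tilde\gamma_i$ with $u_n:=\tilde x_n-\sum_{i<n}\tilde\gamma_i$ the nonincreasing ``potential'' and composing the learnable rates $(K+M)/\varepsilon$ and $M/\varepsilon$ of the two summands through Lemma~\ref{res:addtimes:stochastic}(a) — is lossy: it doubles the $K$-contribution and yields a bound of order $2L(3K+5M)/\varepsilon$, which exceeds $8L(K+M)/\varepsilon$ whenever $M>K$. Bounding the total variation of $\seq{\tilde x_n}$ in one step, as above, is precisely what removes this loss. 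Everything else is routine bookkeeping with Lemma~\ref{res:monotone}, Lemma~\ref{res:addtimes:stochastic} and the crude estimates $M\le K+M$, $L\ge 1$, plus a little attention to strict versus non-strict inequalities in the $\sum_i\beta_i$ bound.
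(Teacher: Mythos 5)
Your proof is correct. The paper itself only sketches this theorem: it says to repeat the proof of Theorem \ref{res:RS}, replacing Theorem \ref{res:supermartingale} by the elementary fact that a nonincreasing sequence of positive reals starting below $K$ has learnable rate $K/\varepsilon$. You follow the same overall skeleton (divide by $P_n:=\prod_{i<n}(1+\alpha_i)$, control $\seq{\tilde x_n}$, telescope for $\sum\beta_i$, recombine via the product lemma with the bounds $\tilde x_n<K+M$ and $P_n<L$), but you deviate at one step: instead of writing $\tilde x_n=u_n+\sum_{i<n}\tilde\gamma_i$ and composing learnable rates through the addition lemma, you bound the total variation $\sum_n|\tilde x_{n+1}-\tilde x_n|<K+2M$ and use the counting argument behind Lemma \ref{res:monotone} to get the rate $(K+2M)/\varepsilon$ for $\seq{\tilde x_n}$ without the $\varepsilon/2$ loss. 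Your accompanying observation is worth recording: a literal transcription of the composition used in Theorem \ref{res:RS} --- rate $(K+M)/\varepsilon$ for the shifted nonincreasing part, $M/\varepsilon$ for the monotone sum, combined through the deterministic analogues of Lemma \ref{res:addtimes:stochastic}(a) and then (b) --- yields $2L(3K+5M)/\varepsilon$, which exceeds the stated $8L(K+M)/\varepsilon$ whenever $M>K$, so some sharpening of exactly this kind (your variation bound, or equivalently noting that a difference of two nonincreasing sequences fluctuates on an interval by at most the decrease of their sum there) is needed to justify the constant $8$. The remaining details check out: the final estimate $2L(2K+3M)\le 8L(K+M)$, the preservation of the strict inequality $\sum_{i=0}^\infty\beta_i<L(K+M)$ via the fixed bound $\bigl(\prod_{i=0}^\infty(1+\alpha_i)\bigr)(K+M)<L(K+M)$ valid uniformly in the partial sums, and the specialisation of Lemma \ref{res:addtimes:stochastic}(b) to deterministic sequences (where the moduli of uniform boundedness may be taken constant in $\lambda$) are all sound.
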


\begin{proof}
An analogous but much simpler argument to the proof of Theorem \ref{res:RS}, where instead of Theorem \ref{res:supermartingale} we use the standard fact (cf. \cite[Chapter 2]{kohlenbach:08:book}) that whenever $(u_n)$ is a nonincreasing sequence of positive reals with $u_0<K$, a learnable rate of convergence for $(u_n)$ is given by $\phi(\varepsilon):=K/\varepsilon$.
\end{proof}

\section{Convergence of abstract stochastic algorithms}
\label{sec:RS:app}

The purpose of this final section is to illustrate how the different components of Theorem \ref{res:RS} can be combined with slightly stronger assumptions to produce new abstract convergence results that resemble specific strategies for using the Robbins-Siegmund theorem. We anticipate that much is possible in this general direction, and our aim here is simply to discuss one of the most basic and widely used strategies for applying the Robbins-Siegmund theorem, where both parts of the theorem are used together to prove the stronger statement $X_n\to 0$. Typically, $X_n$ is related to $\norm{x_n-\theta}$ for some stochastic algorithm $\seq{x_n}$ intended to approximate an object $\theta$, where from $X_n\to 0$ we are then able to infer that $x_n\to \theta$. Classic examples of algorithms whose convergence can be proven using this strategy include the Robbins-Monro \cite{robbins-monro:51:stochastic} and Kiefer-Wolfowitz \cite{kiefer-wolfowitz:52:scheme} schemes, and more modern examples number in the hundreds, including, for example, online learning algorithms \cite{Bott1998}, block-coordinate methods \cite{combettes-pesquet:15:fejer}, general gradient methods as captured recently by the unifying framework of \cite{LM2022}, along with algorithms in more exotic settings, such as Hadamard spaces \cite{Bac2014} and Riemannian manifolds \cite{TFBJ2018}, to name just a few (see \cite{franci-grammatico:convergence:survey:22} for a comprehensive survey of the many ways in which the Robbins-Siegmund theorem and similar abstract convergence results have been applied, and \cite{neri-powell:23:recineq} for a related survey on how logical methods can be applied in this general context).

To represent such an approach in the abstract, throughout this section we consider the special case of Theorem \ref{res:RS} where $B_n:=u_nV_n$ for $\seq{V_n}$ some nonnegative integrable stochastic process and $\seq{u_n}$ a sequence of nonnegative reals -- in other words, we assume that
\begin{equation}
\EE[X_{n+1}\mid\mathcal{F}_n]\leq (1+A_n)X_n-u_nV_n+C_n
\label{RM}
\end{equation}
almost surely for all $n\in\NN$ and that the other conditions of Theorem \ref{res:RS} are satisfied with respect to some fixed $K>\EE[X_0]$ and moduli of uniform boundedness $\rho,\sigma$ for $\prod(1+A_i),\sum C_i<\infty$ respectively. We denote the resulting learnable rate of convergence for $\seq{X_n}$ by $\phi$ and modulus of uniform boundedness for $\sum u_iV_i$ by $\chi$, both as defined in Theorem \ref{res:RS}, and the modulus of uniform boundedness for $\seq{X_n}$ by $\tau$, as in Corollary \ref{cor:RS:bounded}.

Intuitively, the $\seq{u_n}$ perform the role of \emph{step-sizes}, and for that reason we also require the classic assumption that $\sum u_i=\infty$. Because we are concerned with quantitative results, we require in addition a so-called rate of divergence for this series, which here will be a function $r:\NN\times (0,\infty)\to \NN$ such that
\[
\forall n,x\left(\sum_{i=n}^{n+r(n,x)} u_i\geq x\right).
\]
An immediate corollary of the Robbins-Siegmund theorem in this case is $\liminf_{n\to\infty} V_n=0$. We make this property quantitative by introducing a corresponding modulus:
\begin{definition}
\label{def:liminf}
A function $\Phi:(0,1)\times (0,1)\times \NN\to\NN$ is a (stochastic) liminf-modulus for a sequence of random variables $\seq{V_n}$ if for all $\lambda,\varepsilon>0$ and $n\in\NN$:
\[
\PP\left(\forall k\in [n;n+\Phi(\lambda,\varepsilon,n)](V_k\geq\varepsilon)\right)<\lambda.
\] 
\end{definition}
The following is now straightforward:
\begin{corollary}
\label{res:div:to:conv}
Under the above conditions, a liminf-modulus for $\seq{V_n}$ is given by
\[
\Phi(\lambda,\varepsilon,n):=r\left(n,\frac{\chi(\lambda)}{\varepsilon}\right).
\]
\end{corollary}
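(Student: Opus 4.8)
The plan is to argue at the level of individual sample points. Fix $\lambda,\varepsilon\in(0,1)$ and $n\in\NN$, write $\Phi(\lambda,\varepsilon,n):=r\left(n,\tfrac{\chi(\lambda)}{\varepsilon}\right)$, and consider any outcome $\omega$ lying in the event appearing in Definition \ref{def:liminf}, i.e. with $V_k(\omega)\geq\varepsilon$ for every $k\in[n;n+\Phi(\lambda,\varepsilon,n)]$. For such an $\omega$ each term $u_iV_i(\omega)$ with $i$ in that block satisfies $u_iV_i(\omega)\geq\varepsilon u_i$, so summing over the block, discarding the remaining nonnegative terms, and invoking the defining property of the rate of divergence $r$ with $x:=\chi(\lambda)/\varepsilon$ gives
\[
\sum_{i=0}^\infty u_iV_i(\omega)\;\geq\;\sum_{i=n}^{n+r(n,\chi(\lambda)/\varepsilon)}u_iV_i(\omega)\;\geq\;\varepsilon\sum_{i=n}^{n+r(n,\chi(\lambda)/\varepsilon)}u_i\;\geq\;\varepsilon\cdot\frac{\chi(\lambda)}{\varepsilon}\;=\;\chi(\lambda).
\]

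Next I would observe that this inclusion means the event $\forall k\in[n;n+\Phi(\lambda,\varepsilon,n)](V_k\geq\varepsilon)$ is contained in $\{\sum_{i=0}^\infty u_iV_i\geq\chi(\lambda)\}$. Since we are in the special case $B_i=u_iV_i$ and $\chi$ is, by Theorem \ref{res:RS} applied under the standing assumptions of this section, a modulus of uniform boundedness for $\sum_{i=0}^\infty B_i$, the latter event has probability strictly less than $\lambda$. Monotonicity of $\PP$ then yields
\[
\PP\left(\forall k\in[n;n+\Phi(\lambda,\varepsilon,n)](V_k\geq\varepsilon)\right)\;\leq\;\PP\left(\sum_{i=0}^\infty u_iV_i\geq\chi(\lambda)\right)\;<\;\lambda,
\]
which is exactly the defining inequality for a liminf-modulus.

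I do not expect any genuine obstacle here: the whole argument is a single sample-pointwise estimate chaining the rate of divergence for $\sum u_i$ with the modulus of uniform boundedness for $\sum u_iV_i=\sum B_i$ provided by Theorem \ref{res:RS}. The only points requiring a little care are matching the block $[n;n+r(n,\chi(\lambda)/\varepsilon)]$ to the interval in Definition \ref{def:liminf}, and making explicit that nonnegativity of the $u_i$ and $V_i$ is what allows passing from the finite block sum up to the full infinite series.
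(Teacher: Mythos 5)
Your argument is correct and is essentially identical to the paper's proof: the same pointwise estimate chaining the rate of divergence of $\sum u_i$ with the modulus of uniform boundedness $\chi$ for $\sum u_iV_i$, followed by the same event inclusion and monotonicity of $\PP$. Nothing is missing.
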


\begin{proof}
Fix $\lambda,\varepsilon>0$ and $n\in\NN$, and take some $\omega\in \Omega$. Suppose that $V_i(\omega)\geq\varepsilon$ for all $k\in [n;n+\Phi(n,\varepsilon)]$. Then
\[
\chi(\lambda)\leq \varepsilon\sum_{i=n}^{n+\Phi(\lambda,\varepsilon,n)} u_i\leq \sum_{i=n}^{n+\Phi(\lambda,\varepsilon,n)} u_iV_i(\omega)\leq \sum_{i=0}^{\infty} u_iV_i(\omega)
\]
and therefore we have shown that
\[
\PP\left(\forall k\in [n;n+\Phi(\lambda,\varepsilon,n)](V_k\geq\varepsilon)\right)\leq \PP\left(\sum_{i=0}^\infty u_iV_i\geq \chi(\lambda)\right)<\lambda
\]
which completes the proof.
\end{proof}

In the particular situations which we seek to present in the abstract, there would be a concrete relationship between $\seq{V_n}$ and $\seq{X_n}$ which allows us to infer $\liminf_{n\to\infty}X_n$ from $\liminf_{n\to\infty} V_n$. For example, in the well-known application of the Robbins-Siegmund theorem to prove convergence of the Robbins-Monro algorithm, following the presentation of \cite{lai:03:survey} (cf. page 393) the main recurrence has the form (\ref{RM}) for $X_n:=(x_{n+1}-\theta)^2$ and $V_n:=M(x_{n+1})(x_{n+1}-\theta)$, where $M$ is a measurable function satisfying
\[
\inf_{\varepsilon\leq |x-\theta|\leq \varepsilon^{-1}}\left\{M(x)(x-\theta)\right\}>0.
\]
Similarly, the more recent quantitative analysis of stochastic gradient descent \cite[Theorem 1]{karandikar:pp:stochastic} asserts the existence of a function $\eta:(0,\infty)\to (0,\infty)$ satisfying 
\[
V_n(\omega)\geq \eta(X_n(\omega))
\]
where $\eta$ has the additional property that for any $0<\varepsilon\leq M$
\[
\inf_{\varepsilon\leq r\leq K}\eta(r)>0.
\]
We can give such relationships a general quantitative form, which also facilitates a quantitative route between the corresponding liminf-moduli, in the following way:

\begin{lemma}
\label{vn:xn}
Suppose that $\delta:(0,\infty)\times (0,\infty)\to (0,\infty)$ satisfies
\begin{equation}
\varepsilon\leq X_n(\omega)\leq K\implies V_n(\omega)\geq \delta(\varepsilon,K)
\label{vnxn}
\end{equation}
for all $0<\varepsilon\leq K$ and $\omega\in\Omega$. Then a liminf-modulus for $\seq{X_n}$ is given by
\[
\Psi(\lambda,\varepsilon,n):=\Phi\left(\lambda/2,\delta\left(\varepsilon,\tau(\lambda/2)\right),n\right)
\]
where $\Phi$ is any liminf-modulus for $\seq{V_n}$, and so in particular that defined in Corollary \ref{res:div:to:conv}.
\end{lemma}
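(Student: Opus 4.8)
The plan is to combine the liminf-modulus $\Phi$ for $\seq{V_n}$ with the modulus of uniform boundedness $\tau$ for $\seq{X_n}$ (from Corollary \ref{cor:RS:bounded}), using the hypothesis (\ref{vnxn}) to convert a lower bound on $X_k$ into a lower bound on $V_k$. Fix $\lambda,\varepsilon>0$ and $n\in\NN$. The key observation is that, up to an exceptional event of probability $<\lambda/2$, the whole process $\seq{X_k}$ stays below $\tau(\lambda/2)$, and on the complement of that event, whenever $X_k(\omega)\geq\varepsilon$ we also have $\varepsilon\leq X_k(\omega)\leq \tau(\lambda/2)$, so by (\ref{vnxn}) we get $V_k(\omega)\geq \delta(\varepsilon,\tau(\lambda/2))$.

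Concretely, I would set $\delta^* := \delta(\varepsilon,\tau(\lambda/2))$ and write
\[
\PP\left(\forall k\in [n;n+\Psi(\lambda,\varepsilon,n)](X_k\geq\varepsilon)\right)\leq \PP\left(\sup_{m\in\NN}|X_m|\geq\tau(\lambda/2)\right)+\PP\left(\left[\forall k\in [n;n+\Psi(\lambda,\varepsilon,n)](X_k\geq\varepsilon)\right]\cap\left[\sup_{m\in\NN}|X_m|<\tau(\lambda/2)\right]\right).
\]
The first term is $<\lambda/2$ by the definition of $\tau$ as a modulus of uniform boundedness for $\seq{X_n}$. For the second term, on the event in question every $\omega$ satisfies $\varepsilon\leq X_k(\omega)\leq \tau(\lambda/2)$ for all $k\in[n;n+\Psi(\lambda,\varepsilon,n)]$ (using $\tau$ nonincreasing and $X_k$ nonnegative, so $X_k=|X_k|$), hence by (\ref{vnxn}) we have $V_k(\omega)\geq\delta^*$ for all such $k$. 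Since $\Psi(\lambda,\varepsilon,n)=\Phi(\lambda/2,\delta^*,n)$, the second term is therefore bounded by $\PP(\forall k\in[n;n+\Phi(\lambda/2,\delta^*,n)](V_k\geq\delta^*))<\lambda/2$ by the defining property of the liminf-modulus $\Phi$. Adding the two bounds gives the desired $<\lambda$, so $\Psi$ is a liminf-modulus for $\seq{X_n}$.

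There is no serious obstacle here: the argument is a routine union-bound splitting combined with one application of the hypothesis (\ref{vnxn}). The only small points requiring care are (i) ensuring that $\delta^*>0$ so that $\Phi(\lambda/2,\delta^*,n)$ is well-defined — this is immediate since $\delta$ takes values in $(0,\infty)$ and $\tau(\lambda/2)\geq\varepsilon$ may need to be checked, but if $\tau(\lambda/2)<\varepsilon$ the event $\{X_k\geq\varepsilon\}\cap\{\sup|X_m|<\tau(\lambda/2)\}$ is already empty and the bound is trivial; and (ii) correctly matching $|X_k|$ with $X_k$ using nonnegativity. The final clause of the corollary — "and so in particular that defined in Corollary \ref{res:div:to:conv}" — requires nothing further, since the argument used only the abstract defining property of a liminf-modulus for $\seq{V_n}$, which the concrete $\Phi(\lambda,\varepsilon,n)=r(n,\chi(\lambda)/\varepsilon)$ satisfies by Corollary \ref{res:div:to:conv}.
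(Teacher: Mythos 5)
Your proof is correct and follows essentially the same argument as the paper: split off the event $\{\sup_k X_k\geq\tau(\lambda/2)\}$ of probability $<\lambda/2$, use the hypothesis (\ref{vnxn}) with $K=\tau(\lambda/2)$ on the complement to convert $X_k\geq\varepsilon$ into $V_k\geq\delta(\varepsilon,\tau(\lambda/2))$, and then apply the liminf-modulus $\Phi$ for $\seq{V_n}$ at confidence $\lambda/2$. The small side remarks (nonnegativity of $X_k$, the degenerate case $\tau(\lambda/2)<\varepsilon$) are fine and do not change the argument.
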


\begin{proof}
Fix $\lambda,\varepsilon>0$ and $n$ and define the events
	\[
	A:=\sup_{k\in\NN} X_k< \tau(\lambda/2) \ \ \ \mbox{and} \ \ \ B:=\forall k\in [n;n+\Psi(\lambda,\varepsilon,n)](X_k\geq \varepsilon)
	\]
where we recall that $\tau$ is a modulus of uniform boundedness for $\seq{X_n}$.	Take $\omega\in A\cap B$. Then for any $k\in [n;n+\Psi(\lambda,\varepsilon,n)]$ we have $\varepsilon\leq X_k(\omega)< \tau(\lambda/2)$ and thus
	\[
	V_k(\omega)\geq \delta\left(\varepsilon,\tau(\lambda/2)\right)
	\]
	We have therefore shown that
	\[
	\PP(A\cap B)\leq\PP(\forall k\in [n;n+\Psi(\lambda,\varepsilon,n)](V_k\geq \delta\left(\varepsilon,\tau(\lambda/2)\right))<\lambda/2
	\]
	and therefore
	\[
	\PP(B)\leq \PP(A\cap B)+\PP(A^c)< \lambda/2+\lambda/2=\lambda
	\]
	and we are done.
\end{proof}
To give a concrete example of a modulus $\delta$ as in Theorem \ref{vn:xn}, in the case of the Robbins-Monro algorithm discussed above, for any function $\mu:(0,\infty)\to (0,\infty)$ satisfying
\[
\inf_{\varepsilon\leq |x-\theta|\leq \varepsilon^{-1}}\left\{M(x)(x-\theta)\right\}\geq \mu(\varepsilon)
\]
the function
\[
\delta(\varepsilon,K):=\mu\left(\sqrt{\min\left\{\varepsilon,1/K\right\}}\right)
\]
would satisfy (\ref{vnxn}). Putting everything together now gives us the following main convergence result, which can be seen as a special case of Theorem \ref{res:RS} for abstract Robbins-Monro-like algorithms.
\begin{theorem}
\label{res:RM}
Suppose that $\seq{X_n},\seq{V_n},\seq{A_n},\seq{C_n}$ and $\seq{u_n}$ satisfy (\ref{RM}), that $K>\EE[X_0]$ and that $\rho,\sigma$ are nonincreasing moduli of uniform boundedness for $\prod(1+A_i),\sum C_i<\infty$ respectively. Suppose that $\sum u_i=\infty$ with rate $r$, and that (\ref{vnxn}) is satisfied for some function $\delta$. Then $X_n\to 0$ almost surely, and moreover for any $\lambda,\varepsilon>0$ and $g:\NN\to\NN$ there exists some $n\leq \Gamma(\lambda,\varepsilon,g)$ such that
\[
\PP\left(\exists k\in [n;n+g(n)](X_k\geq\varepsilon)\right)<\lambda
\]
where
\[
\Gamma(\lambda,\varepsilon,g):=\tilde f^{(\lceil\phi(\lambda/2,\varepsilon/2)\rceil)}(0) \ \ \ \mbox{for} \ \ \ f(j):=\max\{g(j),\Psi(\lambda/2,\varepsilon/2,j)\}
\]
and $\tilde f(j):=j+f(j)$, and where $\Psi$ is defined as in Lemma \ref{vn:xn} for $\Phi$ as defined in Corollary \ref{res:div:to:conv} and $\phi,\chi$ and $\tau$ as defined in Theorem \ref{res:RS} and Corollary \ref{cor:RS:bounded}.
\end{theorem}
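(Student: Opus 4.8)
The plan is to combine the two pieces of quantitative information that the preceding results attach to $\seq{X_n}$. By Theorem~\ref{res:RS}, $\phi$ is a learnable rate of uniform convergence for $\seq{X_n}$; and by Lemma~\ref{vn:xn} (applied with the liminf-modulus $\Phi$ of Corollary~\ref{res:div:to:conv}, which is built from the modulus $\chi$ of Theorem~\ref{res:RS}, and with $\tau$ from Corollary~\ref{cor:RS:bounded}), $\Psi$ is a liminf-modulus for $\seq{X_n}$. The almost-sure claim $X_n\to 0$ follows quickly from these: $\seq{X_n}$ converges almost surely by Theorem~\ref{res:RS}, while for each rational $\varepsilon>0$ and each $N\in\NN$ the liminf-modulus gives $\PP(\forall k\geq N\,(X_k\geq\varepsilon))\leq\PP(\forall k\in[N;N+\Psi(\lambda,\varepsilon,N)](X_k\geq\varepsilon))<\lambda$ for every $\lambda>0$, hence $\PP(\forall k\geq N\,(X_k\geq\varepsilon))=0$; taking unions over $N$ and over rational $\varepsilon>0$ shows $\liminf_n X_n=0$ almost surely, and a nonnegative convergent sequence whose $\liminf$ is $0$ converges to $0$.

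For the quantitative statement, fix $\lambda,\varepsilon\in(0,1)$ and $g:\NN\to\NN$, and recall the abbreviations $f(j)=\max\{g(j),\Psi(\lambda/2,\varepsilon/2,j)\}$ and $\tilde f(j)=j+f(j)$ from the statement. Put $a_m:=\tilde f^{(m)}(0)$ and $b_m:=\tilde f^{(m+1)}(0)=a_{m+1}$, so that $b_m=a_m+f(a_m)$ and the iterates $\tilde f^{(m)}(0)$ are nondecreasing in $m$. If $f(a_m)=0$ for some $m\leq\lceil\phi(\lambda/2,\varepsilon/2)\rceil$, then taking the least such $m$ we have $g(a_m)=\Psi(\lambda/2,\varepsilon/2,a_m)=0$, so $[a_m;a_m+g(a_m)]=\{a_m\}$ and the liminf-modulus of $\Psi$ at $a_m$ already gives $\PP(X_{a_m}\geq\varepsilon)\leq\PP(X_{a_m}\geq\varepsilon/2)<\lambda/2$; then $n:=a_m\leq\Gamma(\lambda,\varepsilon,g)$ works. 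Otherwise $a_0<b_0=a_1<b_1=a_2<\cdots$ is strictly increasing, and applying the learnable rate $\phi$ for $\seq{X_n}$ with parameters $\lambda/2,\varepsilon/2$ to this sequence yields some $m\leq\phi(\lambda/2,\varepsilon/2)$ with
\[
\PP\bigl(\exists i,j\in[a_m;b_m]\,(|X_i-X_j|\geq\varepsilon/2)\bigr)<\tfrac{\lambda}{2}.
\]
Since $f(a_m)\geq\Psi(\lambda/2,\varepsilon/2,a_m)$ we have $[a_m;a_m+\Psi(\lambda/2,\varepsilon/2,a_m)]\subseteq[a_m;b_m]$, so the liminf-modulus of $\Psi$ at $a_m$ gives
\[
\PP\bigl(\forall k\in[a_m;b_m]\,(X_k\geq\varepsilon/2)\bigr)<\tfrac{\lambda}{2}.
\]

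Adding the probabilities of the two events displayed above, outside an event of probability less than $\lambda$ we simultaneously have $|X_i-X_j|<\varepsilon/2$ for all $i,j\in[a_m;b_m]$ and $X_{k^\ast}<\varepsilon/2$ for some $k^\ast\in[a_m;b_m]$; hence every $k\in[a_m;a_m+g(a_m)]\subseteq[a_m;b_m]$ satisfies $X_k\leq X_{k^\ast}+|X_k-X_{k^\ast}|<\varepsilon$, so $\PP(\exists k\in[a_m;a_m+g(a_m)]\,(X_k\geq\varepsilon))<\lambda$. Finally, as the iterates of $\tilde f$ are nondecreasing and $m\leq\phi(\lambda/2,\varepsilon/2)\leq\lceil\phi(\lambda/2,\varepsilon/2)\rceil$, we get $n:=a_m=\tilde f^{(m)}(0)\leq\tilde f^{(\lceil\phi(\lambda/2,\varepsilon/2)\rceil)}(0)=\Gamma(\lambda,\varepsilon,g)$, which is exactly the required bound. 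The argument is routine once the learnable rate and the liminf-modulus are available; the only points needing a little care are the bookkeeping of the halvings of $\lambda$ and $\varepsilon$, the interval inclusions $[a_m;a_m+g(a_m)]\subseteq[a_m;b_m]$ and $[a_m;a_m+\Psi(\ldots)]\subseteq[a_m;b_m]$, and the degenerate case $f(a_m)=0$ — none of which presents a genuine obstacle.
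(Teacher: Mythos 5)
Your proposal is correct and follows essentially the same route as the paper: apply the learnable rate $\phi$ (with $\lambda/2,\varepsilon/2$) along the iterates of $\tilde f$ to get a good interval $[n;n+f(n)]$ with $n\leq\Gamma(\lambda,\varepsilon,g)$, apply the liminf-modulus $\Psi$ at that same $n$, and combine the two bad events by a union bound to conclude $X_k<\varepsilon$ on $[n;n+g(n)]$ outside probability $\lambda$. The only differences are cosmetic: you unfold the standard learnable-to-metastable conversion explicitly (including the harmless degenerate case $f(a_m)=0$) where the paper invokes it as a black box, and you spell out the almost-sure statement $X_n\to 0$, which the paper leaves implicit.
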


\begin{proof}
Recall that $\phi$ is a learnable rate of uniform convergence for $\seq{X_n}$, and so now fixing $\lambda,\varepsilon>0$, there exists some $n\leq \tilde f^{(\lceil\phi(\lambda/2,\varepsilon/2)\rceil)}(0)$ such that
\[
\PP\left(\exists i,j\in [n;n+f(n)](|X_i-X_j|\geq \varepsilon/2)\right)<\lambda/2
\]
Let $D$ be the event inside the probability above. For this particular $n$, since $\Psi$ is a liminf modulus for $\seq{X_n}$ we also have
\[
\PP\left(\forall k\in [n;n+\Psi(\lambda/2,\varepsilon/2,n)](X_k\geq \varepsilon/2)\right)<\lambda/2
\]
Let $E$ be the event inside this probability. Fix $\omega\in\Omega$ and suppose that there exists some $k(\omega)\in [n;n+g(n)]\subseteq [n;n+f(n)]$ such that $X_{k(\omega)}(\omega)\geq \varepsilon$. Either $\omega\in E$, or there exists some $j(\omega)\in [n;n+\Psi(\lambda/2,\varepsilon/2,n)]\subseteq [n;n+f(n)]$ such that $X_{j(\omega)}(\omega)<\varepsilon/2$. But this then implies that $|X_{j(\omega)}(\omega)-X_{k(\omega)}(\omega)|\geq\varepsilon/2$, and so $\omega \in D$. Therefore
\[
\PP\left(\exists k\in [n;n+g(n)](X_k\geq \varepsilon)\right)\leq \PP(D\cup E)\leq \PP(D)+\PP(E)<\lambda
\]
and the theorem is proved.
\end{proof}

As expected, the conclusion of Theorem \ref{res:RM} does not provide a direct rate of convergence for $X_n\to 0$, only a metastable version (though direct rates are possible under stronger assumptions, as explored recently in \cite[Section 3]{neri-pischke-powell:pp:fejer}). However, it should be noted that even the weaker liminf-modulus $\Psi$ for $\seq{X_n}$ constructed in Lemma \ref{vn:xn} provides relevant numerical information on the convergence speed of the underlying algorithm: In particular, defining $f(\lambda,\varepsilon):=\Psi(\lambda,\varepsilon,0)$ we have
\[
\PP\left(\exists k\leq f(\lambda,\varepsilon)(X_k<\varepsilon)\right)>1-\lambda.
\]
In a typical concrete scenario where we might have e.g. $X_n:=\norm{x_n-\theta}$ for some stochastic algorithm $\seq{x_n}$ converging to a solution $\theta$, we have therefore shown that up to any desired degree of accuracy $\lambda>0$ there exists a set $A_\lambda$ with $\PP(A_\lambda^c)<\lambda$ along with a function $\psi_\lambda:=f(\lambda,\cdot)$ such that for any $\omega\in A_\lambda$, the trajectory $\seq{x_n(\omega)}$ produces an approximate $\varepsilon$-solution $x_k(\omega)$ -- in the sense that $\norm{x_k(\omega)-\theta}<\varepsilon$ -- within the first $\psi_\lambda(\varepsilon)$ iterates. In general, quantitative information of this kind has a clear practical significance. 

To give a concrete example of the general constructions above, if the step sizes $u_i:=u$ are constant, then we can define our rate of divergence $r(n,x):=\lceil x/u\rceil$, and so
\[
\Phi(\lambda,\varepsilon,n)=\ceil*{\frac{\chi(\lambda)}{u\cdot\varepsilon}}=\ceil*{\frac{10\cdot\rho\left(\frac{\lambda}{2}\right)\cdot\left(K+\sigma\left(\frac{\lambda}{8}\right)\right)}{u\cdot\lambda\varepsilon}}
\]
which in turn implies that
\[
f(\lambda,\varepsilon):=\Psi(\lambda,\varepsilon,0)=\ceil*{\frac{20\cdot\rho\left(\frac{\lambda}{4}\right)\cdot\left(K+\sigma\left(\frac{\lambda}{16}\right)\right)}{u\cdot\lambda\cdot\delta(\varepsilon,\tau(\lambda/2))}}.
\]
Further assumptions then simplify this function: If, for example, we have $\prod_{i=0}^\infty (1+A_i)<L$ and $\sum_{i=0}^\infty C_i<M$ almost surely, then the corresponding moduli of uniform boundedness are given by constant functions $\rho(\lambda):=L$ and $\sigma(\lambda):=M$. If, furthermore, condition (\ref{vnxn}) is strengthened to
\[
X_n(\omega)\geq\varepsilon\implies V_n(\omega)\geq \delta(\varepsilon)
\]
it follows that
\[
f(\lambda,\varepsilon)\leq \frac{d}{\lambda\cdot\delta(\varepsilon)}
\]
for a suitable constant $d$ definable from $K,L,M$ and $u$, and so in particular a bound on the search for approximate solutions is given by
\[
\PP\left(\exists k\leq \frac{d}{\lambda\cdot\delta(\varepsilon)}\,(X_k<\varepsilon)\right)>1-\lambda.
\]
This demonstrates that the general quantitative results in the main part of this paper can, under additional assumptions, be used to obtain numerical information that is both simple and highly uniform, which would in turn be inherited by any stochastic algorithms whose convergence reduces to that particular instance of the Robbins-Siegmund theorem.   

\bigskip
\noindent\textbf{Acknowledgements.} The authors are indebted to Nicholas Pischke for numerous insightful discussions and comments on the topics of this paper. They would also like the thank the two anonymous referees, particularly for the suggestion to make the article more concise. No data were created during this research.\medskip

\noindent\textbf{Funding.} The first author was partially supported by the EPSRC Centre for Doctoral Training in Digital Entertainment EP/L016540/1, and the second author was partially supported by the EPSRC grant EP/W035847/1.

\appendix

\section{Proof of Theorem \ref{res:supermartingale}}
\label{app:supermartingale}

Though already established in a much more general terms \cite{neri-powell:25:martingale}, in this appendix we prove Theorem \ref{res:supermartingale} from first principles, taking advantage of the lack of generality to give a streamlined proof. We start with a variant of Doob's well-known upcrossing inequalities \cite{doob:53:stochastic} for martingales. Fixing for the remainder of the section a nonnegative supermartingale $\seq{U_n}$, for $0\leq a<b$ let the random variable $C_N[a,b]$ denote the number of times $\seq{U_n}$ crosses the interval $[a,b]$ up to time $N$, and similarly, $D_N[a,b]$ the number of times it \emph{downcrosses} the interval. Since between any pair of downcrossings there must be exactly one upcrossing, we have $C_N[a,b]\leq 2D_N[a,b]+1$. Define $C[a,b]=\lim_{N\to\infty} C_N[a,b]$ and $D[a,b]$ similarly, noting that also $C[a,b]\leq 2D[a,b]+1$.

\begin{lemma}
\label{res:doob}
For $M,p>0$ let $\mathcal{P}(M,p)$ denote the partition of the interval $[0,M]$ into exactly $p$ equal sized closed intervals. Then
\[
C[a,b]\leq \frac{2p\cdot \EE[U_0]}{M}+1
\]
for all $[a,b]\in \mathcal{P}(M,p)$.
\end{lemma}

\begin{proof}
The proof follows the standard pattern for deriving crossing inequalities (see e.g. \cite{doob:53:stochastic} or \cite{williams:91:martingales}). Given $[a,b]\in \mathcal{P}(M,p)$ define the predictable process $Z_n\in\{0,1\}$ as follows: $Z_1=1$ if $U_0>b$ and $Z_1=0$ otherwise, and $Z_{n+1}=1$ if either $Z_n=1$ and $U_n\geq a$, or $Z_n=0$ and $U_n>b$, and if neither hold then $Z_{n+1}=0$. In other words, $\seq{Z_n}$ represents a betting strategy for maximising our losses, where we start betting each time $\seq{U_n}$ rises above $b$, and stop every time it falls below $a$. Our loss at time $N$ is then at least $(b-a)D_N[a,b]$ minus an error term for the current downcrossing not yet completed, or to be more precise:
\begin{equation}
\label{eqn:downcrossings}
\sum_{i=1}^N Z_i(U_i-U_{i-1})\leq -(b-a)D_N[a,b]+(U_N-b)^+
\end{equation}
Now let $Z'_n:=1-Z_n$, which is also a predictable process, and since $\seq{U_n}$ is a supermartingale we have
\[
\EE\left[\sum_{i=1}^N Z'_i(U_i-U_{i-1})\right]\leq 0
\]
for all $n\in\NN$, and thus 
\begin{equation*}
\begin{aligned}
\EE[U_N]-\EE[U_0]&=\EE\left[\sum_{i=1}^N(U_i-U_{i-1})\right]\\
&\leq \EE\left[\sum_{i=1}^N Z_i(U_i-U_{i-1})\right]+\EE\left[\sum_{i=1}^N Z'_i(U_i-U_{i-1})\right]\\
&\leq \EE\left[\sum_{i=1}^N Z_i(U_i-U_{i-1})\right]
\end{aligned}
\end{equation*}
and so substituting the above into (\ref{eqn:downcrossings}) we get
\begin{equation*}
\label{doob:ineq}
(b-a)\EE\left[D_N[a,b]\right]\leq \EE[U_0]-\EE[U_N]+\EE[(U_N-b)^+]\leq \EE[U_0]
\end{equation*}
An alternative derivation of the same inequality for nonnegative supermartingales can be found in \cite{doob:61:notes} (where (\ref{doob:ineq}) above follows immediately from (3.6) of that paper).

Now if $[a,b]\in \mathcal{P}(M,p)$ then $b-a=M/p$ and so
\[
\EE\left[C_N[a,b]\right]\leq 2\EE\left[D_N[a,b]\right]+1\leq \frac{2p\cdot \EE[U_0]}{M}+1
\]
and the result follows.
\end{proof}

\begin{proof}
[Proof of Theorem \ref{res:supermartingale}]
Fix $\lambda,\varepsilon\in (0,1)$ and $a_0<b_0\leq a_1<b_1\leq\ldots$. First note that by Ville's inequality, for $K>\EE[U_0]$ we have
\[
\PP\left(|U_n|\geq \frac{2K}{\lambda}\right)\leq \PP\left(\sup_{n\in\NN}|U_n|\geq \frac{2K}{\lambda}\right)<\frac{\lambda}{2}
\]
and therefore it suffices to find a bound on some $n\in\NN$ satisfying
\begin{equation*}
\PP\left(\exists k,l\in [a_n;b_n]\left(|U_k-U_l|\geq \varepsilon\right) \cap |U_{a_n}|<\frac{2K}{\lambda}\right)<\frac{\lambda}{2}
\end{equation*}
Define $Q_n$ to be the event inside the probability measure above. Divide $[0,2K/\lambda]$ into $p:=\lceil 8K/\lambda\varepsilon\rceil$ subintervals which we label $[\alpha_j,\beta_j]$ for $j=0,\ldots,p-1$, and add a further interval of the same width at the upper end of $[0,2K/\lambda]$, which we label $[\alpha_{p},\beta_{p}]$. Each interval has width $2K/pl\leq \varepsilon/4$.

Now take $\omega\in Q_i$, so that there exists $k(\omega),l(\omega)\in [a_i;b_i]$ with $|U_{k(\omega)}(\omega)-U_{l(\omega)}(\omega)|\geq \varepsilon$ and also $|X_{a_i}|<2K/\lambda$. By the triangle inequality, either $|U_{a_i}(\omega)-U_{k(\omega)}(\omega)|\geq \varepsilon/2$ or $|U_{a_i}(\omega)-U_{l(\omega)}(\omega)|\geq \varepsilon/2$. Since $X_{a_i}\in [0,2K/\lambda]$ and we have our additional interval $[\alpha_{p},\beta_{p}]$, it follows that one of the intervals $[\alpha_j,\beta_j]$ for $j=0,\ldots,p$ is crossed by $\seq{U_n(\omega)}$ somewhere in $[a_i;b_i]$, and therefore defining
\[
T_{i,j}:=\mbox{$\seq{U_n}$ crosses $[\alpha_j,\beta_j]$ somewhere in $[a_i;b_i]$}
\]
we have shown that
\[
Q_i\subseteq \bigcup_{j=0}^p T_{i,j}
\]
Let $s_N:=\sum_{i=0}^N \PP(Q_i)$. Then
\[
s_N\leq \sum_{i=0}^N \PP\left(\bigcup_{j=0}^p T_{i,j}\right)\leq \sum_{i=0}^N \sum_{j=0}^p\PP\left(T_{i,j}\right)
\]
and so there is some $j\in \{0,\ldots,p\}$ such that
\begin{equation*}
\begin{aligned}
&\frac{s_N}{p+1}\leq \sum_{i=0}^N \PP\left(T_{i,j}\right)=\EE\left[\sum_{i=0}^N I_{T_{i,j}}\right]\leq \EE\left[C[\alpha_j,\beta_j]\right]\leq \frac{2p\cdot \EE[U_0]}{2K/\lambda}+1
\end{aligned}
\end{equation*}
where the last inequality follows by Lemma \ref{res:doob}. Therefore since $\EE[U_0]<K$ and $p<9K/\lambda\varepsilon$
\[
s_N\leq (p+1)\left(p\lambda+1\right)<\left(\frac{9K}{\lambda\varepsilon}+1\right)\left(\frac{9K}{\varepsilon}+1\right)<\frac{100 K^2}{\lambda\varepsilon^2}
\]
Finally, then, if $\PP(Q_i)\geq \lambda/2$ for all $i\leq N$ then
\[
\frac{(N+1)\lambda}{2}\leq s_N<\frac{100 K^2}{\lambda\varepsilon^2}
\]
a contradiction for $N:=200K^2/\lambda^2\varepsilon^2$.
\end{proof}

\bibliographystyle{acm} 
\bibliography{tpbiblio}      

\begin{thebibliography}{10}

\bibitem{arakcheev-bauschke:pp:opial}
{\sc Arakcheev, A., and Bauschke, H.}
\newblock On {O}pial's lemma.
\newblock {\em ArXiv e-prints\/} (2025).
\newblock arXiv, math.OC, 2503.22004.

\bibitem{avigad-dean-rute:12:dominated}
{\sc Avigad, J., Dean, E.~T., and Rute, J.}
\newblock A metastable dominated convergence theorem.
\newblock {\em Journal of Logic \& Analysis 4}, 3 (2012), 1--19.

\bibitem{avigad-gerhardy-towsner:10:local}
{\sc Avigad, J., Gerhardy, P., and Towsner, H.}
\newblock Local stability of ergodic averages.
\newblock {\em Transactions of the American Mathematical Society 362}, 1
  (2010), 261--288.

\bibitem{bauschke-combettes:17:book}
{\sc Bauschke, H., and Combettes, P.}
\newblock {\em Convex Analysis and Montone Operator Theory in Hilbert Spaces}.
\newblock Springer, 2017.

\bibitem{Bac2014}
{\sc Ba\v{c}\'ak, M.}
\newblock {Computing medians and means in Hadamard spaces}.
\newblock {\em SIAM Journal on Optimization 24}, 3 (2014), 1542--1566.

\bibitem{TFBJ2018}
{\sc Bonnabel, S.}
\newblock {Averaging Stochastic Gradient Descent on Riemannian Manifolds}.
\newblock In {\em Proceedings of the 31st Conference On Learning Theory\/}
  (2018), vol.~75 of {\em Proceedings of Machine Learning Research},
  pp.~650--687.

\bibitem{Bott1998}
{\sc Bottou, L.}
\newblock Online algorithms and stochastic approximations.
\newblock In {\em Online Learning and Neural Networks}, D.~Saad, Ed. Cambridge
  University Press, Cambridge, UK, 1998.
\newblock revised, oct 2012.

\bibitem{combettes-pesquet:15:fejer}
{\sc Combettes, P.~L., and Pesquet, J.-C.}
\newblock Stochastic quasi-{F}ej\'er block-coordinate fixed point iterations
  with random sweeping.
\newblock {\em SIAM Journal on Optimization 25}, 2 (2015), 1221--1248.

\bibitem{doob:53:stochastic}
{\sc Doob, J.~L.}
\newblock {\em Stochastic Processes}.
\newblock New York: Wiley, 1953.

\bibitem{doob:61:notes}
{\sc Doob, J.~L.}
\newblock Notes on martingale theory.
\newblock In {\em Berkeley Symposium on Mathematical Statistics and
  Probability}. University of California Press, 1961, pp.~95--102.

\bibitem{franci-grammatico:convergence:survey:22}
{\sc Franci, B., and Grammatico, S.}
\newblock Convergence of sequences: A survey.
\newblock {\em Annual Reviews in Control 53\/} (2022), 161--186.

\bibitem{kachurovskii:96:convergence}
{\sc Kachurovskii, A.~G.}
\newblock The rate of convergence in ergodic theorems.
\newblock {\em Russian Mathematical Surveys 51}, 4 (1996), 653--703.

\bibitem{karandikar:pp:stochastic}
{\sc Karandikar, R.~L., and Vidyasagar, M.}
\newblock Convergence rates for stochastic approximation: Biased noise with
  unbounded variance, and applications.
\newblock {\em Journal of Optimization Theory and Applications 203\/} (2024),
  2412--2450.

\bibitem{kiefer-wolfowitz:52:scheme}
{\sc Kiefer, J., and Wolfowitz, J.}
\newblock Stochastic estimation of the maximum of a regression function.
\newblock {\em Annals of Mathematical Statistic 23\/} (1952), 462--466.

\bibitem{kohlenbach:08:book}
{\sc Kohlenbach, U.}
\newblock {\em {Applied Proof Theory: Proof Interpretations and their Use in
  Mathematics}}.
\newblock Springer Monographs in Mathematics. Springer, 2008.

\bibitem{kohlenbach-lambov:04:asymptotically:nonexpansive}
{\sc Kohlenbach, U., and Lambov, B.}
\newblock Bounds on iterations of asymptotically quasi-nonexpansive mappings.
\newblock In {\em Proceedings of the International Conference on Fixed Point
  Theory and Applications\/} (2004), Yokohama Publishers, pp.~143--172.

\bibitem{kohlenbach-safarik:14:fluctuations}
{\sc Kohlenbach, U., and Safarik, P.}
\newblock Fluctuations, effective learnability and metastability in analysis.
\newblock {\em Annals and Pure and Applied Logic 165\/} (2014), 266--304.

\bibitem{lai:03:survey}
{\sc Lai, T.~L.}
\newblock Stochastic approximation: invited paper.
\newblock {\em Annals of Statistics 31}, 2 (2003), 391--406.

\bibitem{LM2022}
{\sc Li, X., and Milzarek, A.}
\newblock {A unified convergence theorem for stochastic optimization methods}.
\newblock In {\em Proceedings of the 36th International Conference on Neural
  Information Processing Systems\/} (2022), Curran Associates Inc., USA,
  pp.~33017--33119.

\bibitem{liu:etal:22:RSrates}
{\sc Liu, J., and Yuan, Y.}
\newblock On almost sure convergence rates of stochastic gradient methods.
\newblock In {\em Proceedings of Thirty Fifth Conference on Learning Theory\/}
  (2022), P.~Loh and M.~Raginsky, Eds., vol.~178 of {\em Proceedings of Machine
  Learning Research}, pp.~2963--2983.

\bibitem{neri-pischke:pp:formal}
{\sc Neri, M., and Pischke, N.}
\newblock Proof mining and probability theory.
\newblock {\em ArXiv e-prints\/} (2024).
\newblock arXiv, math.LO, 2403.00659.

\bibitem{neri-pischke-powell:25:learnability}
{\sc Neri, M., Pischke, N., and Powell, T.}
\newblock Generalized learnability of stochastic principles.
\newblock In {\em Proceedings of Computability in Europe (CiE'25)\/} (2025),
  vol.~15764 of {\em LNCS}, pp.~333--348.

\bibitem{neri-pischke-powell:pp:fejer}
{\sc Neri, M., Pischke, N., and Powell, T.}
\newblock {On the asymptotic behaviour of stochastic processes, with
  applications to supermartingale convergence, Dvoretzky's approximation
  theorem, and stochastic quasi-Fejér monotonicity}.
\newblock {\em ArXiv e-prints\/} (2025).
\newblock arXiv, math.OC, 2504.12922.

\bibitem{neri-powell:23:recineq}
{\sc Neri, M., and Powell, T.}
\newblock A computational study of a class of recursive inequalities.
\newblock {\em Journal of Logic and Analysis 15}, 3 (2023), 1--48.

\bibitem{neri-powell:25:martingale}
{\sc Neri, M., and Powell, T.}
\newblock On quantitative convergence for stochastic processes: Crossings,
  fluctuations and martingales.
\newblock {\em Transactions of the American Mathematical Society, Series B
  12\/} (2025), 974--1019.

\bibitem{qihou:01:nonexpansive}
{\sc Qihou, L.}
\newblock Iteration sequences for asymptotically quasi-nonexpansive mappings
  with error member.
\newblock {\em Journal of Mathematical Analysis and Applications 259\/} (2001),
  18--24.

\bibitem{robbins-monro:51:stochastic}
{\sc Robbins, H., and Monro, S.}
\newblock A stochastic approximation method.
\newblock {\em The Annals of Mathematical Statistics 22}, 3 (1951), 400--407.

\bibitem{robbins-siegmund:71:lemma}
{\sc Robbins, H., and Siegmund, D.}
\newblock A convergence theorem for non negative almost supermartingales and
  some applications.
\newblock In {\em Optimizing methods in statistics}. Elsevier, 1971,
  pp.~233--257.

\bibitem{sebbouh:etal:21:RSrates}
{\sc Sebbouh, O., Gower, R.~M., and Defazio, A.}
\newblock {Almost sure convergence rates for Stochastic Gradient Descent and
  Stochastic Heavy Ball}.
\newblock In {\em 34th Annual Conference on Learning Theory\/} (2021), vol.~134
  of {\em Proceedings of Machine Learning Research}, pp.~1--37.

\bibitem{tao:07:softanalysis}
{\sc Tao, T.}
\newblock Soft analysis, hard analysis, and the finite convergence principle.
\newblock Essay posted 23 May 2007, 2007.
\newblock Appeared in: ‘T. Tao, Structure and Randomness: Pages from Year One
  of a Mathematical Blog. AMS, 298pp., 2008’.

\bibitem{williams:91:martingales}
{\sc Williams, D.}
\newblock {\em {Probability with Martingales}}.
\newblock Cambridge University Press, 1991.

\end{thebibliography}


\end{document}